\documentclass{aims}

\usepackage{pgfplots}
\usepackage{amsmath,amssymb,mathrsfs}
\usepackage{blkarray}
 \usepackage{amscd}
  \usepackage{paralist}
    \usepackage{bbm}
  \usepackage{epsfig,graphics} 
  \usepackage{epsfig} 
\usepackage{graphicx}  
\usepackage{epstopdf}
 \usepackage[colorlinks=true]{hyperref}
 \usepackage{multirow}

   \usepackage{tikz}
\usetikzlibrary{arrows}
\usetikzlibrary{positioning}
\hypersetup{urlcolor=blue, citecolor=red}

  \textheight=8.2 true in
   \textwidth=5.0 true in
    \topmargin 30pt
     \setcounter{page}{1}



\newtheorem{theorem}{Theorem}[section]
\newtheorem{corollary}{Corollary}

\newtheorem{proposition}{Proposition}

\theoremstyle{definition}
\newtheorem{definition}[theorem]{Definition}
\newtheorem{remark}{Remark}

\def\rd{{\rm d}}

\def\ve {{\bf e}}

\def\vp{{\bf p}}
\def\vpi{{\boldsymble{\pi}}}
\def\vmu{{\boldsymble{\mu}}}

\def\vx{{\bf x}}
\def\vu{{\bf u}}
\def\vv{{\bf v}}

\def\mF{{\bf F}}

\def\vpi{\mbox{\boldmath$\pi$}}
\def\vPi{\mbox{\boldmath$\Pi$}}

\def\vmu{\mbox{\boldmath$\mu$}}
\def\Pr{\mathop{\rm Pr}\nolimits}
\def\dbar{\mathchar'26\mkern-12mu \rd}

\def\( {\left( }
\def\) {\right) }

\newcommand{\beq}{\begin{equation}}
\newcommand{\eeq}{\end{equation}}
\newcommand{\ba}{\begin{array}}
\newcommand{\ea}{\end{array}}
\newcommand{\bea}{\begin{eqnarray*}}
\newcommand{\eea}{\end{eqnarray*}}
\newcommand{\bc}{\begin{center}}
\newcommand{\ec}{\end{center}}

\newcommand{\bt}{\begin{table}}
\newcommand{\et}{\end{table}}

\title[Stochastic Dynamics II]{Stochastic Dynamics II: Finite Random Dynamical Systems, Linear Representation, and Entropy Production}

\author[F. X.-F. Ye and H. Qian]{}

 \keywords{Stochastic process, Markov chain, Random dynamical
system, Entropy.}

 \email{xye16@jhu.edu}
  \email{hqian@u.washington.edu}
  
\begin{document}

\maketitle

\centerline{\scshape Felix X.-F. Ye }
\medskip
{\footnotesize
 \centerline{Department of Applied Mathematics \& Statistics}
   \centerline{Johns Hopkins University}
   \centerline{Baltimore, MD 21218-2608, USA}
} 

\medskip

\centerline{\scshape Hong Qian}
\medskip
{\footnotesize
 \centerline{Department of Applied Mathematics}
   \centerline{University of Washington}
   \centerline{Seattle, WA 98195-3925, USA}
}

\bigskip


\begin{abstract}
We study finite state random dynamical systems (RDS)
and their induced Markov chains (MC)  as stochastic models 
for complex dynamics.  The linear representation of 
deterministic maps in RDS is a matrix-valued random variable 
whose expectation corresponds to the transition matrix of 
the MC.  The instantaneous Gibbs entropy, Shannon-Khinchin 
entropy of a step, and the entropy production rate of the MC are discussed.  These three concepts, as key anchoring points in 
applications of stochastic dynamics, characterize respectively the uncertainties of a system at instant time $t$, the randomness 
generated in a step in the dynamics, and the dynamical asymmetry 
with respect to time reversal.  The stationary entropy production 
rate, expressed in terms of the cycle distributions, has found 
an expression in terms of the probability of
the deterministic maps with single attractor in the maximum 
entropy RDS.  For finite RDS with invertible transformations, 
the non-negative entropy production rate of its
MC is bounded above by the Kullback-Leibler divergence
of the probability of the deterministic maps with respect to its time-reversal
dual probability.
\end{abstract}

\section{Introduction}
\label{sec-1}
	The theory of nonlinear stochastic dynamical systems  
has gradually replacing classical deterministic dynamics as 
the mathematical representation of complex  systems 
and processes \cite{mqy-chinese,mumford,qian-I}.  Depending 
on the origin of uncertainties in the applications, 
 stochastic dynamics can be mathematically modeled
either in terms of  stochastic process or random 
dynamical system (RDS) \cite{Arnold1998}.  In \cite{ywq}, we have 
studied the contradistinctions between these two types of
mathematical approaches, and found the RDS perspective
as a more refined description of stochastic phenomena.
In the present paper we continue to study discrete time
finite state RDS, with a certain level of rigor,
 and several related
concepts that are likely to be the key anchor points
between stochastic dynamics and their applications.
These concepts were originated in statistical physics as the 
theory that justifies thermodynamics via a mechanical
formulation; they have since permeated through discussions 
on complexity.

The stochastic process perspective, particularly in terms of
Markov processes, has a long history in 
statistical physics \cite{vankampen,risken}.   For example,
equations describing continuous time Markov process, e.g., master equations and Fokker-Planck equations, have been studied extensively. On the other hand,
the field of deterministic nonlinear dynamics has
witnessed a surge of activities in terms of 
Perron-Frobenius-Ruelle operator (or transfer operator) \cite{lasota2013,baladi} 
and Koopman operator \cite{mezic} as the linear
representations of nonlinear dynamics.  In terms of
finite-state RDS and its induced Markov chain (MC), the transition 
probability matrix of the latter has been identified as the
expectation of a matrix-valued random variable, where the 
matrix is the linear representation of the deterministic map in RDS \cite{ywq}: 
The left- and right-matrix-multiplications represent Perron-Frobenius
and Koopman operators of the deterministic map, respectively;  
and families of stochastic Perron-Frobenius operators and stochastic Koopman operators can be defined for MC in terms of the 
language of RDS. 
Both families form semi-groups whose generators are represented by the transition matrix in terms of left- and right-multiplications.  They are discrete analog of the solutions to Kolmogorov forward and backward equations.

	Entropy and entropy production are two distinctly different
key concepts originated in thermodynamics, the study of
Newtonian particles in terms of their stochastic 
motions | called heat.  In physicists' theory, entropy is a 
function of the state of a system. Entropy production, 
however, is associated with the amount of heat being
generated in a process; it is path dependent in general.  
In fact, the physicists of the earlier time carefully introduced
the notations of $\rd A$ and $\ \dbar Q$, where 
$\rd A$ represents a change in a state function $A$ 
that is path independent, and $\dbar Q$ is associated 
with the accumulation of heat $Q$ (or work $\dbar W$) 
that is a function of a path.   The celebrated First Law of 
Thermodynamics states that $\ \dbar Q +\ \dbar W= \rd E$, 
where $E$ is
called internal energy.  We see that if
the work $\ \dbar W = \mF\cdot\rd\vx$ is due to a 
force $\mF$ with a potential, $\mF=-\nabla U(\vx)$,
then $\dbar Q = \rd\big(E+U\big)$. 
In terms of the nonlinear stochastic dynamics,
therefore, entropy, as a state function, should be 
a functional of the instantaneous probability distribution
$\vp(t)$, but entropy production is associated
with the transition probability. 

	Entropy is also a widely used concept in statistical physics, information theory, and many other areas that involve statistics 
and distributions.  Yet, it still does not have a universal 
measure-theoretical definition.  In the present work, we 
introduce the concept of {\em information} as the negative logarithm of
the non-negative random variable, the Radon-Nikodym derivative $\frac{\rd \mathbb{P}}{\ \rd \mathbb{P}'}(\omega)$
defined on a given probability 
space $(\Omega,\mathcal{F},\mathbb{P})$, 
 where the probability measure $\mathbb{P}'$ is 
absolutely continuous w.r.t. $\mathbb{P}$.
 So the information is a dimensionless, non-negative random variable, which, 
we argue, is the only legitimate quantity that can be 
placed inside the logarithmic function: 
$-\log\left[\frac{\rd \mathbb{P}}{\  \rd \mathbb{P}'}(\omega)\right]$.
This definition is motivated by the work of Kolmogorov \cite{kolmogorov}, the idea of self-information \cite{tribus},
and recent success of stochastic thermodynamics
\cite{seifert}.  The {\it relative entropy}, or {\it Kullback-Leibler divergence}, then simply is the negative expected value of 
the information:
\[
      H\big(\mathbb{P},\mathbb{P}'\big) = \mathbb{E}^{\mathbb{P}}\left[\log
                   \left(\frac{\rd \mathbb{P}}{\  \rd \mathbb{P}'}(\omega)\right)\right]  
           = -\left\{ -\int_{\mathbb{P}'} 
               \left[\frac{\rd \mathbb{P}}{\  \rd \mathbb{P}'}(\omega)\right]
             \log\left[\frac{\rd \mathbb{P}}{\  \rd \mathbb{P}'}(\omega)\right]
            \rd \mathbb{P}'(\omega) \right\}.
\]  
The term inside $\{\cdots\}$ actually could be understood 
as Shannon's information entropy of $\mathbb{P}$, expressed as 
the Radon-Nikodym derivative w.r.t. $\mathbb{P}'$.  For
a finite ranged continuous random variable $X$: $(\Omega,\mathcal{F},\mathbb{P})\rightarrow ([a,b], \mathcal{B})$, in terms of
the normalized Lebesgue measure on the finite interval, $\mu=\frac{\lambda([a,b])}{b-a}$,
the relative entropy $H(\mu, \mathbb{P})$ quantifies {\em statistical bias} in 
using the observable $X$ to represent the probability space 
$(\Omega,\mathcal{F},  \mathbb{P})$.

	Under this definition, standard Shannon's information 
entropy has a hidden reference measure: 
the counting measure for a discrete 
random variable and the Lebesgue measure for a continuous 
random variable.  When the normalization is finite, the 
Shannon entropy is off by a trivial constant; but the constant 
could be problematic since normalization of a reference measure, based on observables from engineering and science, 
may have a dimension.  On the other hand, if the reference 
measure is not normalizable, then many difficulties 
were known to arise \cite{hobson,gaspard-wang}.
We shall point out that in statistical physics, the notion
of free energy is simply the relative entropy w.r.t. the Gibbs 
measure, and the normalization factors have a prominent
role known as {\em partition functions}.

	This measure-theoretic notion of information and 
 entropy is naturally generalized from
random variables to stochastic processes.  The information (or randomness) generated in a step, then, gives
rise to the concept of {\em stochastic entropy production rate},
which has several forms based on the reference
measure.  More specifically, in stochastic dynamics, the 
amount of information (or randomness) in the entire history 
of a path is different from the the amount of information
(or randomness) in the system at the ``current time'' $t$.
The term ``entropy'' physicists use often refers to the latter; 
they call the former entropy production:  Entropy as a
state function is a functional of the marginal
probability at time $t$ irrespective of path-history, and 
as in the theory of thermodynamics, entropy production
is path-dependent.  When the historical paths of 
two ``identical'' particles are neglected, they become 
indistinguishable \cite{bennaim}.

	We have inherited, therefore, two very 
different classes of stochastic, thermodynamic quantities: 
entropy and relative entropy associated with $\vp(t)$ in 
one class, and entropy production, Shannon-Khinchin 
entropy, etc. defined in terms of the path probability 
$\mathbb{P}_{[0,t]}$, in another.  These latter
path-dependent quantities, say $\Theta[\mathbb{P}_{[0,t]}]$,
naturally defines ``production rate'' and ``in a step'' as $\ \dbar\Theta\equiv\Theta[\mathbb{P}_{[0,t+1]}]-\Theta[\mathbb{P}_{[0,t]}]$.
But they should not be confused with the change in $\Xi[\vp(t)]$: $\Delta\Xi\equiv\Xi[\vp(t+1)]-\Xi[\vp(t)]$,
where $\Xi[\vp(t)]$ belongs to the first class.
Only in certain special types of systems, for example 
systems with detailed balance, 
that the entropy production rate can be expressed in terms of 
the change of a state function.  It is also immediately clear that 
the stationary entropy production rate is zero in this type of 
systems, that mathematically represent physicists' notion
of thermodynamic equilibrium, in which one can find 
{\em thermodynamic potential function} 
for path-dependent quantities.

	The entropy and entropy production introduced above
can be rigorously established in the theory of Markov
chains.  The notions of  instantaneous Gibbs entropy, 
Shannon-Khinchin entropy in a step \cite{khinchin}, and 
entropy production rate \cite{jqq} are three distinct
concepts, each represents a different aspect of the same
stochastic dynamics. 	In the present work, we are interested in these concepts under the representation of finite i.i.d. RDS. In particular, we 
establish an inequality between entropy production rate of 
a doubly stochastic MC and the relative entropy of an RDS that consists of 
all invertible transformations.

	The paper is organized as follows: In Sec. \ref{sec-2} we
provide the definition of a general RDS with a medium level of 
rigor, and provide some simple examples in finite state space.  We particularly
call the attention of the difference in the habitual
perspectives of mathematics, in terms of space of all
paths, and that of statistical physics, in terms of
evolving probability distribution along the time,
on the state space.   In Sec. \ref{sec-3}, we discuss 
the linear representations of deterministic maps in an 
RDS and its corresponding MC.  Sec. \ref{sec-4} first
provides a brief, but rather coherent presentation of the 
theory of entropy production of MC; and then
establishes several interesting relationships about entropy production between
the MC and its corresponding maximum 
entropy RDS, and the doubly stochastic MC and its
 invertible RDS.

\section{Preliminaries}
\label{sec-2}
In \cite{ywq}, we have presented a finite-state i.i.d. RDS intuitively. 
It is described by the triplet $(\mathscr{S}, \Gamma, Q)$, where $\mathscr{S}$ is a finite state space, $\Gamma$ is the set of all deterministic transformations from $\mathscr{S}$ into itself and $Q$ is the probability measure on $\sigma$-field of $\Gamma$. Note $\Gamma$ is a monoid with the composition of transformations 
as the operation. If the finite state space $\mathscr{S}$ has $n$ state, then there are $n^n$ possible deterministic transformations. Therefore the cardinality $\|\Gamma\|= n^n$.  As a dynamics
in the state space, the system starts initially with some state $i$ in $\mathscr{S}$, maps $\alpha_0, \alpha_1, \dots$ in $\Gamma$ are independently chosen according to the probability measure $Q$. The random variable $X_t$ is constructed by means of composition of independent random maps, $X_t= \alpha_{t-1}\circ \cdots \circ \alpha_0(i)$. 

	Mathematically, one follows the construction to rigorously define 
the RDS \cite{arnold2013, Arnold1991,rds-finance}.  We shall start with general RDS and later specify each 
term for finite RDS, and focus mainly on finite  i.i.d. case. Here finite RDS means RDS on the finite state space. 

\begin{definition} 
$(\Omega, \mathcal{F}, \mathbb{P}, \theta)$ is a {\it metric\footnote{The term metric is often used in the literature for historical reasons.} dynamical system} if $(\Omega, \mathcal{F}, \mathbb{P})$ is a probability space and $\theta(t): \Omega\rightarrow\Omega, t\in \mathbb{Z}$ is a family of measure-preserving transformations such that 
\begin{enumerate}
\item $\theta(0)=id, \theta(s)\circ \theta(t)=\theta(s+t)$ for every $s,t\in \mathbb{Z}$.

\item The mapping $(t, \omega)\rightarrow \theta(t)\omega$ is measurable.

\item $\theta(t)\mathbb{P}=\mathbb{P}$ for every $t\in \mathbb{Z}$.
\end{enumerate}

\end{definition}
The set of the map $\theta(t)$ forms a commutative group and preserves the measure $\mathbb{P}$.  Distinctly different from physicists' notion of dynamics as ``step-by-step'' motion, in
stochastic mathematics, the space $\Omega$ contains all the possible paths, and $(\Omega, \mathcal{F}, \mathbb{P}, \theta(t))$ is a 
stationary process. This two-sided discrete-time dynamical system $(\Omega, \mathcal{F}, \mathbb{P}, \theta(t))$ is also known as {\it base flow} of random dynamical system. In many applications, the base flow is usually ergodic. If the property 3 is not fulfilled, then $(\Omega, \mathcal{F}, \mathbb{P}, \theta(t))$ is called {\it measurable dynamical system}.   Non-stationary dynamics belong to the latter, as 
illustrated next.

\begin{definition}\label{RDS-def}
A measurable {\it random dynamical system} (RDS) on the complete separate metric space $(\mathscr{S}, d)$ over a metric dynamical system $(\Omega, \mathcal{F}, \mathbb{P}, \theta)$ is a map with one-sided time, $  \mathbb{N}\times \Omega\times \mathscr{S}\rightarrow \mathscr{S}: (t, \omega, i) \rightarrow \varphi(t, \omega)i$,
with the following properties: 
\begin{enumerate}
\item The map $ (t, \omega, i) \rightarrow \varphi(t, \omega)i$ is $\mathcal{B}(\mathbb{N})\otimes \mathcal{F}\otimes \mathcal{B}(\mathscr{S}), \mathcal{B}(\mathscr{S})$-measurable. 

\item The map $i\rightarrow\varphi(t,\omega)i$ satisfies the {\it cocycle property}: 
\begin{align} \label{cocycle}
 \varphi(0,\omega)=id, \varphi(s+t, \omega)=\varphi(s, \theta(t)\omega)\circ \varphi(t, \omega) 
 \end{align}
for every $s, t\in \mathbb{N}$ and $\omega\in \Omega$. 

\end{enumerate}
\end{definition}

From the definition, the RDS is driven by the base flow and for one particular noise realization $\omega$, one can treat $i\rightarrow\varphi(t, \omega)i$ as a non-autonomous dynamical system, which defines one-point motion. The cocycle property is intuitively understood as follows: evolve some initial state $i$ for $s$ steps with particular noise realization $\omega$ and then go through $t$ more steps with the same noise from the $s$ steps mark; it gives the same result as evolving the same initial state $i$ for $t+s$ steps with the same noise realization $\omega$. 
The map $\varphi(t, \omega)$ may not be invertible, so the RDS is defined one-sided in time. 
We call an RDS {\it ergodic} if there exists a probability measure $\vpi$ on $\mathscr{S}$, such that for any $i\in \mathscr{S}$, the law of the one-point motion $\varphi(t, \omega)i$ converges to $\vpi$. We don't assume this one-point motion is Markovian. 
It is possible to relax the metric dynamical system to measurable dynamical system, but the limiting behaviors of RDS will be unclear. 

\subsection{ Examples of RDS}

\vskip 0.3cm\noindent

{\bf\em Example 1.} 
For finite i.i.d. RDS, the above terms have explicit expressions. The space $\Omega$ is the full shift, 
$\Omega=\Gamma^{\mathbb{Z}}$, which is the set of all possible 
two sided infinitely long sequences of deterministic transformations. 
\begin{align} \label{omega}
\Omega=\Big\{  \omega: (\cdots \alpha_{-1},\alpha_0, \alpha_1, \alpha_2 \cdots, \alpha_k, \cdots) \big| \alpha_k\in \Gamma \Big\}
\end{align}
The probability measure is the Bernoulli measure defined on the cylinder set, 
\begin{align}
\mathbb{P}\big([\alpha_0, \alpha_1, \alpha_2, \dots, \alpha_k]\big)=Q(\alpha_0)Q(\alpha_1)\dots Q(\alpha_k).
\end{align}
So maps at different steps are chosen independently with the same probability measure $Q$. The mappings $\theta(t)$ are the left Bernoulli shift for $t$ elements, i.e, $\theta(t)\omega=\alpha_{t}$. Define the time-one mapping $\varphi(1, \omega)=\alpha_0$ which is the first element of the sequence of deterministic transformations. Then the map $\varphi(t, \omega)$ is the composition of i.i.d. random maps, $ \alpha_{t-1}\circ \cdots \circ \alpha_0$. If it applies to an initial state $i$, it generates a one-point motion $X_t(\omega)=\varphi(t,\omega)i$. Now we constructs finite i.i.d. RDS rigorously. Clearly $X_t$ is a Markov chain (MC) and its transition probability is
\begin{align} \label{transition}
\Pr(i, G)=Q(\alpha: \alpha(i)\in G),
\end{align}
 for any $x\in \mathscr{S}$ and any measurable set $G\in \mathcal{B}(\mathscr{S})$. 
 
 The connection between finite i.i.d. RDS and MC is discussed in \cite{ywq}. Briefly, a finite i.i.d. RDS uniquely defines an MC, but a given MC is generally compatible with many possible RDS. The reason for non-uniqueness is that a transition probability only determines the statistical property of the one-point motion of a possible RDS, while an RDS also describes the simultaneous motion of two or more points. 
Given a general MC (not necessarily with finite states), Kifer proved the existence of corresponding i.i.d. RDS representation by measurable maps with some weak conditions on the state space \cite{kifer2012random}. Quas also showed the sufficient conditions for the representation of an MC on a manifold by smooth maps \cite{quas1991}. 

  {\bf\em Example 2.} 
Another example to generate random maps is via a Markov chain. Then the probability measure $\mathbb{P}$ is the Markov measure. The measure of a cylinder set is defined by 
\begin{align}
\mathbb{P}([\alpha_0, \alpha_1, \dots, \alpha_k])=\vpi_{\alpha_0}p_{\alpha_0\alpha_1}\cdots p_{\alpha_{k-1}\alpha_k},
\end{align}
where $p_{\alpha_i \alpha_j}$ is the transition probability of the Markov chain from the map $\alpha_i$ to $\alpha_j$ and $\vpi$ is the stationary probability of the Markov chain. One can check this Markov measure is still invariant with the Bernoulli shift map. 
However, the stochastic process induced $X_t=\varphi(t, \omega)i$ may not be a Markov chain in general. Even one keeps as many steps of memory as possible, the Markov property may not hold any more \cite{ywq}. We use `may' since in Example 4 we will show it is still possible that the dynamics of $X_t$ to be Markovian.

  {\bf\em Example 3.} 
It is also possible to generate random maps via an independent but not identical process. 
Then the measure is defined by 
\begin{align}
\mathbb{P}([\alpha_0, \alpha_1, \alpha_2, \dots, \alpha_k])=Q_0(\alpha_0)Q_1(\alpha_1)\dots Q_k(\alpha_k),
\end{align}
where $Q_0, Q_1, \dots $ might be different measures. Shift maps $\theta(t)$ in general doesn't preserve this measure and is no longer stationary. So it is only the measurable dynamical system. However, the stochastic process $X_t=\varphi(t, \omega)i$ is still well-defined and follows a time-inhomogeneous Markov chain with its transition probability at step $t$
\begin{align}
\Pr_t(i, G)=Q_t(\alpha: \alpha(i)\in G).
\end{align}
Unless some special cases, different probability measures $Q_t$ will result in different transition probability $P_t$. 

From these three examples, it seems that independence of random maps at each step may be necessary to the Markov property of the stochastic process $X_t$. It turns out that's not true. Here is the counter-example. In fact, if we choose random maps in Markovian way, the state dynamics could still be Markovian. 

  {\bf\em Example 4.} 
Let the state space be $\mathscr{S}=\{1,2\}$ and the set of deterministic transformations $\Gamma$ be 
\begin{align*}
  \Gamma=\left\{
    \underbrace{\left(\begin{array}{c}
        1\rightarrow 1   \\   2 \rightarrow 2
          \end{array}\right)}_{\alpha_1},\  \underbrace{\left(\begin{array}{cc}
          1\rightarrow 2 \\    2\rightarrow 1
          \end{array}\right)}_{\alpha_2}, \  \underbrace{\left(\begin{array}{cc}
       1\rightarrow 1\\   2\rightarrow 1
          \end{array}\right)}_{\alpha_3},\  \underbrace{\left(\begin{array}{cc}
 1\rightarrow 2\\    2\rightarrow 2
          \end{array}\right)}_{\alpha_4}
\right\}.
\end{align*}

\begin{figure}
\includegraphics[scale=0.5]{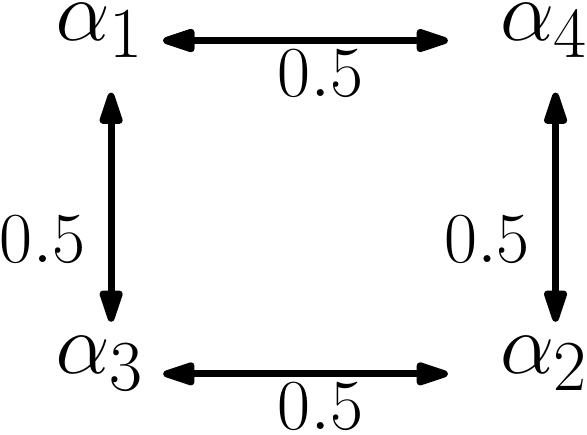}
\caption{Random maps are generated via this Markov chain. This is the illustration of state transition diagram for the Markov chain. }
\label{figure-markov}
\end{figure}

They are denoted as $\alpha_1, \alpha_2, \alpha_3, \alpha_4$. Then an MC with the transition matrix 
\[
M=\left(\begin{array}{cc}
       0.5 & 0.5   \\  0.5 & 0.5
          \end{array}\right)
\]
can be represented by i.i.d. RDS with probability $Q(\alpha_1)=0.2, Q(\alpha_2)=0.2, Q(\alpha_3)=0.3, Q(\alpha_4)=0.3$. It is possible to generate the random maps via a Markov chain, but the dynamics on the state space are still Markovian. If the initial distribution of the deterministic maps is $\vp_{\alpha_1}(0)=\vp_{\alpha_2}(0)=0.5$ and the state transition diagram is illustrated in Fig. \ref{figure-markov}, then $\vp_{\alpha_1}(t)=\vp_{\alpha_2}(t), \vp_{\alpha_3}(t)=\vp_{\alpha_4}(t)$ at any steps. So the RDS induces a Markov chain in the state space with the transition matrix $M$. On the other hand, if we consider two-point motion $X_0=1$, $Y_0=2$ and apply the same sequence of maps, it is impossible to have $X_0=1, X_1=1, X_2=2$ and $Y_0=2, Y_1=2, Y_2=1$ since $\alpha_1$ cannot go to $\alpha_2$ in the single step.

\section{Linear Representation of Finite RDS}
\label{sec-3}

First, an RDS on an $n$-dimensional vector space $X$ is called {\it a linear RDS} if $\varphi(t,\omega)\in \mathcal{L}(X)$ for each $t\in \mathbb{N}, \omega\in\Omega$, where $\mathcal{L}(X)$ is the space of linear operators of $X$. If state $i$ is denoted as standard basis $\ve_i$ in $n$-dimensional vector space $\mathrm{R}^n$, the deterministic transformation $\alpha\in\Gamma$ has a linear representation in the $n\times n$ matrices, called {\it deterministic transition matrix}
\begin{align} \label{matrix-rep}
(P)_{ij}\triangleq\begin{cases}1,&j=\alpha(i),
\\0,&\text{otherwise},
\end{cases}
\ \ i,j \in \mathscr{S}
\end{align}
The dynamics of the map $\alpha$ applying on the state $i$ is represented by the multiplication $\ve_i P_\alpha$. Note that $\ve_i$ is a row vector. Moreover, $\ve_i$ can be considered as the probability concentrated on state $i$.
Such representation is also discussed in \cite{ywq}. Now $P_\alpha$ is a 0-1 matrix and has exactly one entry 1 in each row and 0s otherwise. 
 So $P_\alpha$ is the representation of $\varphi(t,\omega)$ in the space of linear operators of $\mathbb{R}^n$. 

Second, composition of transformations is represented by the matrix multiplication, i.e, $P_{\alpha_1}\cdot P_{\alpha_2}=P_{\alpha_1\circ \alpha_2}$. In addition, this linear RDS $\varphi(t, \omega)$ has the form of random matrices production and it is easy to see the cocycle property (\ref{cocycle}).

The stochastic process $X_t$ starting from $X_0=i$ is $X_t(\omega)=\varphi(t, \omega)i$, and its linear representation is 
\begin{align}
\vv(t)=\ve_{i} P_{\alpha_0}\cdot P_{\alpha_1}\cdots P_{\alpha_{t-1}}.
\end{align}
 It is defined in the push-forward sense. Define another stochastic process $Y_t$ starting from $Y_0=i$,  $Y_t(\omega)=\varphi(t, \theta(-t)\omega)x_0$ and its linear representation is 
 \begin{align}
\vu(t)=\ve_{i} P_{\alpha_{-t}}\cdot P_{\alpha_{-2}}\cdots P_{\alpha_{-1}}.
\end{align}
$Y_t(\omega)$ is defined in the pullback sense.
If the RDS is i.i.d. and ergodic, $X_t(\omega)$ follows an MC and $Y_t(\omega)$ has the same distribution as $X_t(\omega)$ for each $t$. But $X_t(\omega)$ and $Y_t(\omega)$ have different behaviors: $X_t(\omega)$ moves ergodically through the state space $\mathscr{S}$ along $t$; $Y_t(\omega)$ could converge to a limit as $t\rightarrow +\infty$. Similar idea was discussed in iterative random functions \cite{Diaconis1999}. 
Although $P_{\alpha_i}$ is picked randomly for $Y_t$, it is multiplied on the left hand side which is the beginning of the matrices sequence, and the rest matrices remain the same. Roughly speaking, the random matrix multiplication may have the memory decay effect along the time and the last couple matrices which are fixed may determine the vector $\vu(t)$.   Here is an elementary example to illustrate the significant
difference between a push-forward matrix multiplication and a
pullback matrix multiplication:  Consider $3\times 3$ deterministic
transition matrice and their random products:  If the matrix 
\[
            P^* = \left(\begin{array}{ccc}
             0 & 1 & 0 \\   0 & 1 & 0 \\  0 & 1 & 0
           \end{array}\right)
\]
is chosen, then the product of any deterministic 
transition matrix multiplied on the left of $P^*$ will be invariant.
This is not the case if a deterministic transition matrix is 
multiplied on the right of $P^*$:
\[
	P^* \left(\begin{array}{ccc}
             0 & 0 & 1 \\   1 & 0 & 0 \\  0 & 1 & 0
           \end{array}\right) = \left(\begin{array}{ccc}
             1 & 0 & 0 \\   1 & 0 & 0 \\  1 & 0 & 0
           \end{array}\right),  \
     P^* \left(\begin{array}{ccc}
             1 & 0 & 0 \\   0 & 0 & 1 \\  0 & 0 & 1
           \end{array}\right) = \left(\begin{array}{ccc}
             0 & 0 & 1 \\   0 & 0 & 1 \\  0 & 0 & 1
           \end{array}\right).
\]
The pullback product has a limit, while the push forward
product does not.  More rigorous discussions in terms of 
multiplicative ergodic theorem and possible extension to 
countable states can be found in \cite{hqwyy}. 

Consider $Y_t(\omega)$ starting with the whole state space $\mathscr{S}$, $Y_t(\omega)=\varphi(t, \theta(-t)\omega)\mathscr{S}$, which are $n$ simultaneous sequences starting with state $1, \dots, n$, $\text{supp}(Y_t(\omega))$ is non-increasing. Once multiple sequences collide at some instance, they will be together forever. For a fixed $\omega$, $\lim_{t\rightarrow +\infty}\text{supp}(Y_t(\omega))$ may be smaller than $\mathscr{S}$, even can be a singleton (with some assumptions).  This means these $n$ simultaneous sequences synchronize into one sequence. 
If the support of $Y_t(\omega)$  as $t\rightarrow+\infty$ is almost surely a singleton, it is equivalent with the RDS synchronizes, i.e, for any different initial states $x_1$ and $x_2$, i.e, $x_1\ne x_2$, $\lim_{t\rightarrow +\infty}\Pr(\omega: \varphi(t, \omega)x_1=\varphi(t,\omega)x_2)=1$. Then the limit $Y_{\infty}(\omega)=\lim_{t\rightarrow+\infty} Y_t(\omega)$ exists almost surely and is $\omega$-dependent. Moreover, $Y_{\infty}(\omega)$ follows the invariant distribution $\vpi$ since $X_t(\omega)$ follows the invariant distribution $\vpi$ as $t\rightarrow +\infty$,  but the limit doesn't exist. This is exactly the idea of {\it the coupling from the past} \cite{Wilson1996}. The algorithm works is because $Y_{\infty}(\omega)$ can be sampled in finite time. There exists some finite $t_0(\omega)$ such that $\varphi(t, \theta(-t)\omega)\mathscr{S}$ is singleton for all $t\ge t_0(\omega)$ almost surely. Then this singleton is $Y_{\infty}(\omega)$ for this given $\omega$ and exactly has the law of $\vpi$. So this method is also called perfect sampling. However, not every finite RDS have such properties and the sufficient condition is the RDS is monotone and ergodic \cite{Michael2008}. Except for sampling, synchronization in RDS has also been widely discovered in applied science \cite{Lin2013, ymq}.


At last,  from the definition of Perron-Frobenius-Ruelle operator (or transfer operator) for deterministic map $\alpha$, $F: \mathbb{R}^n\rightarrow \mathbb{R}^n$, 
$(F\vv)_j=\sum_{i: \alpha(i)=j}\vv_i, \vv\in \mathbb{R}^n$.
So $P_\alpha$ is the representation of Perron-Frobenius operator  for the deterministic transformation $\alpha$, 
and $\vv\rightarrow\vv P_\alpha$ can also be interpreted as the evolution of probability mass $\vv$ corresponding to the mapping $\alpha$. 
From the definition of Koopman operator for $\alpha$, $K: \mathbb{R}^n\rightarrow \mathbb{R}^n$, $(K\vu)_j=\vu_{\alpha(j)}, \vu\in \mathbb{R}^n$. So $P_\alpha^T$ is the representation of Koopman operator for $\alpha$. 
We introduce the stochastic Perron-Frobenius operator family and stochastic Koopman operator family associate to the finite RDS. 

\begin{definition}
The {\it stochastic Perron-Frobenius operator} $F_{s, t}: \mathbb{R}^n\rightarrow \mathbb{R}^n$ for every $0\le s\le t$ associate to finite RDS $\varphi$ is defined by
\begin{align} \label{P-F-operator}
(F_{s,t}\vv)_j\triangleq\mathbb{E}^\mathbb{P}\Big[\sum_{ i: \varphi(t-s, \theta(s)\omega)i =j}\vv_i\Big].
\end{align}

The {\it stochastic Koopman operator} $K_{s,t}: \mathbb{R}^n\rightarrow \mathbb{R}^n$ for every $0\le s\le t$ associate finite RDS $\varphi$ is defined by
\begin{align}\label{Koopman-operator}
(K_{s,t}\vu)_j\triangleq\mathbb{E}^\mathbb{P}\Big[\vu_{\varphi(t-s, \theta(s)\omega)j}   \Big].
\end{align}
\end{definition}
 The expectation is taken with respect to probability measure $\mathbb{P}$. We refer the family of operators $F_{s, t}, K_{s,t}$, parametrized by time $s$ and $t$, as the stochastic Perron-Frobenius operator family and the stochastic Koopman operator family respectively. 
x
If $\theta$ is the stationary process, $K_{s,t}$ and $F_{s,t}$ are also stationary, i.e, both family of operators can be expressed by the time difference, $K_{t-s}$ and $F_{t-s}$. Furthermore, if the stochastic process $X_t$ is an MC, then both family of operators form semigroups, i.e, one-parameter family of linear operators with the properties, $F_0=id$ and $F_{t+s}=F_{ t}\circ F_{s}$. Here the semigroups are characterized through their generators, $F_1$ and $K_1$, which are corresponding stochastic Perron-Frobenius operator and Koopman operator for time-one random map $\varphi(1, \omega)$.  In terms of matrix representation (\ref{matrix-rep}), the generator $M$ is represented by $M=\mathbb{E}^Q[P_\alpha]$, which is exactly the same as Markov transition matrix for $X_t$. The operator composition is also represented by matrix multiplication, so the stochastic Perron-Frobenius operator $F_t=M^t$. The stochastic Koopman operator is then represented by the adjoint of the matrix $M$. More importantly, this adjoint property is also true for general case. 

\begin{theorem}\label{adjoint}
For every $\vv, \vu\in \mathbb{R}^n$, 
\begin{align}
\langle F_{s,t}\vv, \vu\rangle =\langle \vv, K_{s,t}\vu \rangle, 
\end{align}
where $\langle\vv,\vu \rangle=\vv\vu^*$.

\end{theorem}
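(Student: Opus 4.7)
The plan is to unwind the definitions of both sides into sums and expectations, reindex the double sum using the fact that for each fixed $\omega$ the map $\varphi(t-s,\theta(s)\omega)$ is a single-valued deterministic transformation, and then recognize the result as the Koopman pairing.

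First I would expand the left-hand side using the definition of the inner product $\langle\vv,\vu\rangle = \vv\vu^*$ and the definition \eqref{P-F-operator}:
\begin{align*}
\langle F_{s,t}\vv,\vu\rangle
 = \sum_{j} (F_{s,t}\vv)_j \,\vu_j
 = \sum_j \vu_j\,\mathbb{E}^{\mathbb{P}}\!\left[\sum_{i:\,\varphi(t-s,\theta(s)\omega)i=j}\vv_i\right].
\end{align*}
By linearity of expectation (the state space is finite, so Fubini is automatic) I pull the $\sum_j \vu_j$ inside, giving
\begin{align*}
\langle F_{s,t}\vv,\vu\rangle
 = \mathbb{E}^{\mathbb{P}}\!\left[\sum_j\sum_{i:\,\varphi(t-s,\theta(s)\omega)i=j}\vv_i\,\vu_j\right].
\end{align*}

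The key observation is that for each fixed $\omega$ the transformation $\alpha_\omega\triangleq\varphi(t-s,\theta(s)\omega)\in\Gamma$ is a (single-valued) function from $\mathscr{S}$ to $\mathscr{S}$: every $i\in\mathscr{S}$ contributes to exactly one $j$, namely $j=\alpha_\omega(i)$. Hence the double sum collapses to a single sum indexed by $i$, and I can substitute $\vu_j=\vu_{\alpha_\omega(i)}$:
\begin{align*}
\sum_j\sum_{i:\,\alpha_\omega(i)=j}\vv_i\,\vu_j = \sum_i \vv_i\,\vu_{\varphi(t-s,\theta(s)\omega)i}.
\end{align*}

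Putting these together and pulling the (deterministic in $\omega$) factor $\vv_i$ outside the expectation gives
\begin{align*}
\langle F_{s,t}\vv,\vu\rangle
 = \sum_i \vv_i\,\mathbb{E}^{\mathbb{P}}\!\left[\vu_{\varphi(t-s,\theta(s)\omega)i}\right]
 = \sum_i \vv_i\,(K_{s,t}\vu)_i
 = \langle \vv, K_{s,t}\vu\rangle,
\end{align*}
by definition \eqref{Koopman-operator}, proving the claim. There is no real obstacle here; the only thing one must be careful about is the reindexing step, which relies crucially on each $\varphi(t-s,\theta(s)\omega)$ being a \emph{function} (exactly one $j$ per $i$) rather than a multivalued map — this is precisely what the matrix representation \eqref{matrix-rep} encodes as $P_\alpha$ being a $0$--$1$ row-stochastic matrix, and it is what makes $F$ and $K$ honest adjoints with respect to the standard pairing.
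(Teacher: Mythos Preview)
Your proof is correct. The paper's own proof takes a slightly different route: it verifies the identity on basis vectors, showing that both $\langle F_{s,t}\ve_i,\ve_j\rangle$ and $\langle \ve_i, K_{s,t}\ve_j\rangle$ equal $\mathbb{P}\big[\omega:\varphi(t-s,\theta(s)\omega)i=j\big]$, and then invokes bilinearity to extend to arbitrary $\vv,\vu$. You instead carry out the computation directly for general vectors, with the reindexing step $\sum_j\sum_{i:\alpha_\omega(i)=j}\vv_i\vu_j=\sum_i\vv_i\vu_{\alpha_\omega(i)}$ playing the role that the basis-vector check plays in the paper. Both arguments hinge on exactly the same fact---that each $\varphi(t-s,\theta(s)\omega)$ is a single-valued map---so the difference is purely organizational: the paper's version isolates the matrix entry $\mathbb{P}[\alpha_\omega(i)=j]$ explicitly, while yours keeps everything bundled and avoids the separate linearity appeal.
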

\begin{proof}
We first check $\vv=\ve_i$ and $\vu=\ve_j$. 
\begin{align}
&\langle F_{s,t}\ve_i, \ve_j\rangle= \mathbb{E}^\mathbb{P}\Big[\sum_{ i: \varphi(t-s, \theta(s)\omega)i =j}\ve_i\Big]=\mathbb{P}\Big[\omega:\varphi(t-s, \theta(s)\omega)i =j \Big] \\
&\langle \ve_i, K_{s,t}\ve_j \rangle=\mathbb{E}^\mathbb{P}\Big[(\ve_j)_{\varphi(t-s, \theta(s)\omega)i}   \Big]=\mathbb{P}\Big[\omega:\varphi(t-s, \theta(s)\omega)i =j \Big] 
\end{align}
Both operators are linear, so the adjoint property is true for any vector $\vv, \vu\in \mathbb{R}^n$. 
\end{proof}

\section{Entropy Theory of MC}
\label{sec-4}

If the $n$-state MC $X_t$ with the transition probability matrix 
$M_{ij}=\Pr\{X_{t+1}=j|X_t=i\}$ is irreducible and aperiodic, there exists a unique stationary distribution $\vpi$ and for any initial distribution $\vp(0)$, the MC will converge to the stationary distribution, i.e, $\lim_{t\rightarrow +\infty}\vp(t)=\lim_{t\rightarrow +\infty} \vp(0)M^t=\vpi$. 

The Shannon entropy for the probability measure $\vp$ is the expectation of the information content, 
\begin{align}\label{shannon}
S(\vp)\triangleq\mathbb{E}^\vp[-\log(\vp(\omega))].
\end{align}

 As we have
stated in Sec. \ref{sec-1}, more rigorously one considers 
the relative entropy or Kullback-Leibler divergence 
of $\vp$ with respect to $\vmu$, $H(\vp, \vmu)$,
\begin{align}
H(\vp, \vmu)\triangleq \begin{cases}\mathbb{E}^\vp \left[\log\left(\frac{\rd\vp}{\rd\vmu}(\omega)\right)\right] & \vp \ll \vmu, \\
          +\infty &\text{Otherwise,} 
\label{info-entropy}
\end{cases}
\end{align}

\subsection{Relative Entropy w.r.t. Stationary Probability}  The results in this subsection are collected from scattered 
literatures.  We give a brief summary for completeness.   
Statistical physicists always consider the instantaneous
 distribution $\vp(t)$. 
One natural choice of the $\vmu$ in (\ref{info-entropy}) 
is the invariant distribution $\vpi$.  Then we have 
\cite{voigt,qian-jmp}:

\begin{theorem}  \label{theo-voigt}
$H\big(\vp(t),\vpi\big)$ is a non-increasing function of $t$.
\end{theorem}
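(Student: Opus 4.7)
The plan is to prove monotonicity via the standard convexity/log-sum inequality argument, lifted to joint distributions on pairs $(i,j)$. The core idea is that running the Markov chain one step is an instance of marginalization over a joint law, and relative entropy can only decrease under marginalization.

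First, since the chain is irreducible and aperiodic, $\vpi_j > 0$ for every $j \in \mathscr{S}$, so $\vp(t) \ll \vpi$ and the quantity $H(\vp(t),\vpi)$ is finite and well-defined for every $t$. By the invariance of $\vpi$, we have the two identities
\begin{equation*}
  \vp(t+1)_j \;=\; \sum_i \vp(t)_i\,M_{ij}, \qquad \vpi_j \;=\; \sum_i \vpi_i\,M_{ij}.
\end{equation*}
This expresses both $\vp(t+1)$ and $\vpi$ as marginals, over the first coordinate, of the joint distributions $q_{ij} := \vp(t)_i M_{ij}$ and $\mu_{ij} := \vpi_i M_{ij}$ on $\mathscr{S}\times\mathscr{S}$.

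Next I would apply the log-sum inequality coordinatewise in $j$. For every $j$ with $\vp(t+1)_j > 0$,
\begin{equation*}
  \vp(t+1)_j \log\frac{\vp(t+1)_j}{\vpi_j}
  \;=\;
  \Bigl(\sum_i \vp(t)_i M_{ij}\Bigr) \log\frac{\sum_i \vp(t)_i M_{ij}}{\sum_i \vpi_i M_{ij}}
  \;\le\; \sum_i \vp(t)_i M_{ij} \log\frac{\vp(t)_i M_{ij}}{\vpi_i M_{ij}},
\end{equation*}
where on the right the factors $M_{ij}$ cancel inside the logarithm, leaving $\log(\vp(t)_i/\vpi_i)$. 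Summing over $j$ and using $\sum_j M_{ij}=1$ gives
\begin{equation*}
  H(\vp(t+1),\vpi) \;\le\; \sum_{i,j} \vp(t)_i M_{ij}\log\frac{\vp(t)_i}{\vpi_i} \;=\; \sum_i \vp(t)_i \log\frac{\vp(t)_i}{\vpi_i} \;=\; H(\vp(t),\vpi),
\end{equation*}
which is the claim. Iterating this one-step estimate gives monotonicity in $t$.

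I expect the only delicate step to be a careful treatment of terms where either $\vp(t)_i$ or $M_{ij}$ vanishes, so that the cancellation $M_{ij}/M_{ij}$ inside the logarithm is meaningful; the usual convention $0\log 0 = 0$ and restricting the sums to indices where $q_{ij}>0$ handles this cleanly, and the log-sum inequality itself is stated for nonnegative sequences precisely to accommodate zeros. Alternatively, one can phrase the whole argument as the data-processing inequality for relative entropy applied to the stochastic kernel $M$, but the log-sum version is the most elementary and self-contained route.
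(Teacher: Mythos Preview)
Your proof is correct and follows essentially the same one-step convexity argument as the paper. The only cosmetic difference is that the paper expands the difference $H(\vp(t),\vpi)-H(\vp(t-1),\vpi)$ and then applies the elementary bound $\ln x\le x-1$ to the resulting ratio, whereas you package the same convexity via the log-sum inequality applied coordinatewise in $j$; these are equivalent formulations of the same Jensen step.
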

\begin{proof}
For $t\ge 1$, 
\begin{eqnarray*}
	\Delta H\big(\vp(t-1),\vpi\big)&\equiv& H\big(\vp(t),\vpi\big) - H\big(\vp(t-1),\vpi\big)
\\
   &=& \sum_{i\in  \mathscr{S}} \vp_i(t)\ln\left(\frac{\vp_i(t)}{\vpi_i}\right)
  -\sum_{i\in \mathscr{S}}\vp_i(t-1)\ln\left(\frac{\vp_i(t-1)}{\vpi_i}\right)
\\
   &=& \sum_{i,j\in  \mathscr{S}} \left[ \vp_j(t-1)M_{ji}\ln\left(\frac{\vp_i(t)}{\vpi_i}\right)
  - \vp_i(t-1)M_{ij}\ln\left(\frac{\vp_i(t-1)}{\vpi_i}\right) \right]
\\
   &=& \sum_{i,j\in  \mathscr{S}}\vp_i(t-1)M_{ij}\ln\left(\frac{\vpi_i \vp_j(t)}{\vp_i(t-1)\vpi_j}\right)
\\
	&\le&  \sum_{i,j\in  \mathscr{S}} \vp_i(t-1)M_{ij}\left(\frac{\vpi_i \vp_j(t)}{\vp_i(t-1)\vpi_j}-1\right)
\\
	&=&  \sum_{i,j\in  \mathscr{S}}\frac{\vpi_i M_{ij}\vp_j(t)}{\vpi_j}- \sum_{i,j\in \mathscr{S}} \vp_i(t-1)M_{ij} \ = \  1-1  \ = \ 0.
\end{eqnarray*}
\end{proof}

\begin{remark}
For a finite MC with uniform stationary 
$\vpi_i=1/n$, $S(\vp(t)) = \ln(n) - H\big(\vp(t),\vpi\big)$.
Therefore, the above theorem becomes the statement
``entropy never decreases''.  This scenario is known as 
{\em microcanonical system} in statistical physics.
\end{remark}

\begin{remark}
For any MC, $\Delta S(\vp(t)) \equiv S(\vp(t+1))-S(\vp(t))$ satisfies
\begin{eqnarray}
    \Delta S(\vp(t)) &=& \sum_{i,j\in  \mathscr{S}} \vp_i(t)M_{ij}\ln\left(
          \frac{\vp_i(t) }{\vp_j(t+1) }\right)
\label{ebe}\\
	&=&  \underbrace{ \sum_{i,j\in  \mathscr{S}}\vp_i(t)M_{ij}\ln\left(
          \frac{\vp_i(t)M_{ij} }{\vp_j(t+1)M_{ji}}\right)  }_{\text{non-negative}}+
           \sum_{i,j\in  \mathscr{S}}\vp_i(t)M_{ij}\ln\left(
          \frac{M_{ji} }{M_{ij}}\right).
\nonumber
\end{eqnarray}
If the MC is detailed balance, $\vpi_iM_{ij}=\vpi_jM_{ji}$, then
the second term on the right-hand-side can be expressed as 
$\Delta \overline{E}(\vp(t))\equiv\overline{E}(\vp(t+1))-\overline{E}(\vp(t))$, 
which is defined as
\begin{equation}
  \overline{E}(\vp(t)) = \sum_{i \in\mathscr{S}} 
          \vp_i(t) \Big( -\ln\vpi_i\Big).
\label{b-law}
\end{equation}
Note that $\Delta S$ and $\Delta \overline{E}$ are changes in functions 
of state, $S(t)$ and $\overline{E}(t)$. This scenario is known 
as Gibbsian {\em canonical system} in statistical physics.  
$\overline{E}$ should be identified with the internal energy;
and the relationship (\ref{b-law}) between internal energy 
and equilibrium measure is known as the {\em Boltzmann 
distribution}.  
\end{remark}

\begin{remark}
In statistical physics, $S$ is called Gibbs entropy.  Then
$\overline{E}-S$ should be identified with the notion of
{\em free energy} there.  Theorem \ref{theo-voigt} thus becomes
``free energy of a canonical system never increases; it
reaches its minimum when a system is at its equilibrium''.
\end{remark}

	The identification of the non-negative term in
(\ref{ebe}) with the concept of {\em entropy production 
rate} in nonequilibrium thermodynamics appeared repeatedly 
in physics and chemistry literature, see
\cite{cox,lebowitz,schnakenberg,luo-jl}.
For MC without detailed balance, the last
term in (\ref{ebe}) cannot be expressed as the change
of a state function, but it can be identified with {\em heat
exchange rate}.  Then in the stationary state, when
$\Delta S = 0$, there is 
positive entropy production rate that is balanced with 
the heat dissipation.  Such a state is called a 
{\em nonequilibrium steady state} \cite{jqq}.

\subsection{Shannon-Khinchin entropy and Entropy Production for MC}

For the entire path, a more rigorous construction of MC is to consider a measurable dynamical system $(\Omega', \mathcal{F}', \mathbb{P}', \theta(t))$, where $\Omega'=\mathscr{S}^\mathbb{Z}$,  $\theta$ is again the shift map. The probability measure $\mathbb{P}'$ is defined on the cylinder set $[i_0, i_{1}, \dots, i_{t}]$,
 \begin{align}
  \mathbb{P}'([i_0, i_{1}, \dots, i_{t}])=\vp_{i_0}(0)M_{i_0i_{1}}\dots M_{i_{t-1}i_{t}}.
\end{align}
Since $\vp(0)$ may be not necessarily its stationary distribution, $\mathbb{P}'$ is not $\theta$-invariant.  
But marginalizing states at previous steps $0, 1, \dots, t-1$, it gives the probability at step $t$, i.e, $\sum_{i_0, \dots, i_{t-1}} \mathbb{P}'([i_0, i_{1}, \dots, i_{t}])=\vp_{i_{t}}(t)$, where $\vp(t)=\vp(0)M^t$.
The stochastic process $X_t$ is defined as $X_t(\omega)\triangleq\theta(t)\omega=\omega_t$. 

For the MC, applying (\ref{shannon}) to the finite time 
distribution of MC restricted to $\sigma$-field $\mathcal{F'}_0^{t}$, $S(\mathbb{P'}_{[0,t]})$, where $\mathcal{F'}_0^{t}=\sigma(X_s: 0\le s \le t)$, is called Shannon-Khinchin entropy \cite{khinchin}
\begin{align}
 H_{SK}(\mathbb{P'}_{[0,t]})=-\sum_{i_0,\dots, i_{t}} \vp_{i_0}(0)M_{i_0i_{1}}\dots M_{i_{t-1}i_{t}}\log\Big(\vp_{i_0}(0)M_{i_0i_{1}}\dots M_{i_{t-1}i_{t}}\Big).
\end{align}

It relates to the metric entropy of the MC via 
\begin{align}
\label{mc-m-entropy}
h_{\text{MC}}&=\lim_{t\rightarrow+\infty}\frac{ H_{SK}(\mathbb{P'}_{[0,t]})}{t}=-\sum_{i,j\in\mathscr{S}}\pi_iM_{ij}\log(M_{ij}),
\end{align}
which is a property for stationary MC. 
The Shannon-Khinchin entropy of a step is $\ \dbar H_{SK}(t)\equiv H_{SK}(\mathbb{P'}_{[0,t+1]})-H_{SK}(\mathbb{P'}_{[0,t]})$. Here one can show the asymptotic limit
of $\ \dbar H_{SK}(t)$ is the metric entropy $h_{\text{MC}}$
as $t\to +\infty$. The metric entropy quantifies the average randomness (or information) generated per step in an MC.

More detailed analysis, including the relation between metric entropy of an RDS and its corresponding RDS, can be found in \cite{ywq}. Briefly, first, the set of Markov transition matrices forms the convex hull with deterministic transition matrices as its vertices. Second, 
 metric entropy of RDS  corresponding to the MC has the upper bound, i.e, $h_{RDS}\le -\sum_{i,j\in\mathscr{S}} M_{ij}\log(M_{ij})$ and such representation is uniquely attainable since a strictly concave function over a convex hull has the unique maximum. Moreover, there is an explicit expression for such maximum entropy RDS, i.e, $Q(P_{i_1,i_2,\dots, i_n})=M_{1i_1}M_{2i_2}\dots M_{ni_n}$, where $P_{i_1,i_2,\dots, i_n}$ corresponds to the deterministic map $1\rightarrow i_1, 2\rightarrow i_2, \dots, n\rightarrow i_n$.

From now on, we consider the transition probability matrix $M$ satisfies the condition $M_{ij}>0 \leftrightarrow M_{ji}>0$ for any $i, j\in \mathscr{S}$, then it is possible to define the relative entropy of the distribution of the process with respect to its time reversal restricted to $\sigma$-field $\mathcal{F'}_0^{t}$. The time-reversed process $X^-$ is defined as follows, 
\begin{align} 
X^-_s(\omega)=X_{t-s}(\omega), \ \forall s\in[0, t].
\end{align}
So $X^-$ is $\mathcal{F'}_0^{t}$ measurable. The time-reversed process is also called {\it adjoint process} of the MC. For the sample sequence $i_0, i_{1}, \dots, i_{t}$, the time-reversed process gives $i_{t}, i_{t-1}, \dots, i_0$. Define $\mathbb{P'}^-$ as the probability measure for the time-reversed process $X^-_{s}(\omega)$. The probability measure for the time-reversed process $\mathbb{P'}^-$ on this cylinder set is 
\begin{align}
\mathbb{P'}^-([i_{t}, i_{t-1}, \dots, i_{s}])=\vp_{i_{s}}(s)M_{i_{s}i_{s+1}}\dots M_{i_{t-1}i_t}, \ \text{for any $s\in[0,t-1]$}
\end{align}
Here we assume $\vp_i(s)>0$ for all $i$ and $s\in[0,t-1]$. Since the process is non-stationary, it is necessary to indicate the initial time $s$. 

\begin{proposition}
The time-reversed process of a Markov chain $(M, \vpi)$ is Markovian. Moreover, the transition matrix of the time-reversed process is \begin{align}
M^-_{ij}(t)=\frac{\vp_{j}(t-1) M_{ji}}{\vp_{i}(t)},\ i,j \in  \mathscr{S} \ \text{for any $t$.}
\end{align}
 \end{proposition}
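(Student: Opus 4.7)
The plan is to verify the Markov property directly at the level of conditional probabilities, and then use Bayes' rule to derive the transition formula. First, I would fix $s \in [0, t-1]$ and states $j_0, j_1, \ldots, j_{s+1} \in \mathscr{S}$, and compute
\[
\Pr\bigl(X^-_{s+1}=j_{s+1} \bigm| X^-_s=j_s, X^-_{s-1}=j_{s-1}, \ldots, X^-_0=j_0\bigr).
\]
Translating through the definition $X^-_r(\omega)=X_{t-r}(\omega)$, this is
\[
\Pr\bigl(X_{t-s-1}=j_{s+1} \bigm| X_{t-s}=j_s, X_{t-s+1}=j_{s-1}, \ldots, X_t=j_0\bigr).
\]
By the definition of $\mathbb{P}'$ on cylinders, the joint probability of the whole chain $(X_0, \ldots, X_t)$ factors through $\vp_{i_0}(0)$ and one-step transitions $M_{i_ki_{k+1}}$, so dividing the two cylinder probabilities (numerator with $X_{t-s-1}=j_{s+1}$ fixed, denominator summed over that coordinate) telescopes: all terms indexed by times $\ge t-s$ cancel identically, leaving only the factor involving the transition into time $t-s$.

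More concretely, writing out the ratio,
\[
\frac{\sum_{i_0,\ldots,i_{t-s-2}}\vp_{i_0}(0)M_{i_0i_1}\cdots M_{i_{t-s-2}\, j_{s+1}}\,M_{j_{s+1}j_s}M_{j_sj_{s-1}}\cdots M_{j_1 j_0}}{\sum_{i_0,\ldots,i_{t-s-1}}\vp_{i_0}(0)M_{i_0i_1}\cdots M_{i_{t-s-1}\, j_s}\,M_{j_sj_{s-1}}\cdots M_{j_1 j_0}}.
\]
The common tail $M_{j_sj_{s-1}}\cdots M_{j_1j_0}$ cancels, and the numerator sum equals $\vp_{j_{s+1}}(t-s-1)M_{j_{s+1}j_s}$ while the denominator sum equals $\vp_{j_s}(t-s)$. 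Hence the conditional probability is $\vp_{j_{s+1}}(t-s-1)M_{j_{s+1}j_s}/\vp_{j_s}(t-s)$, which depends only on $(j_s,j_{s+1})$ and not on the earlier history $j_0,\ldots,j_{s-1}$. This establishes the Markov property of $X^-$, and the one-step transition from state $i$ at reversed-time $r$ to state $j$ at reversed-time $r+1$ (i.e.\ forward-time $t-r-1$) is $\vp_j(t-r-1)M_{ji}/\vp_i(t-r)$. Setting the forward time label to $t$ in the statement yields the formula $M^-_{ij}(t)=\vp_j(t-1)M_{ji}/\vp_i(t)$.

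The argument is essentially routine; the only real point of care is bookkeeping the time indices, because the reversed process is indexed by $s$ running from $0$ to $t$ while the transition matrix is written as a function of the forward-time label $t$ at which the transition terminates. I would therefore begin the proof with an explicit sentence identifying $M^-_{ij}(t)$ as the conditional probability $\Pr(X_{t-1}=j\mid X_t=i)$, so that the subsequent Bayes computation $\Pr(X_{t-1}=j\mid X_t=i) = \Pr(X_t=i\mid X_{t-1}=j)\Pr(X_{t-1}=j)/\Pr(X_t=i) = M_{ji}\vp_j(t-1)/\vp_i(t)$ is unambiguous. The positivity assumption $\vp_i(s)>0$ is precisely what is needed to make this ratio well defined.
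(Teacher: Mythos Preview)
Your proposal is correct and follows essentially the same approach as the paper: both verify the Markov property by computing the relevant conditional probability as a ratio of cylinder-set probabilities and observing that the result depends only on the most recent state. The only cosmetic difference is that the paper invokes its already-stated formula for $\mathbb{P}'^{-}$ on cylinders (which has the marginalization built in), whereas you carry out the marginalization explicitly from the forward measure $\mathbb{P}'$; your version is slightly more self-contained but otherwise identical in substance.
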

 \begin{proof}
The Markovian property means $P(X^-_{t+1}=i_{s-1}| \mathcal{F'}_s^t)=P(X^-_{t+1}=i_{s-1}| X^-_t)$ for any $t\ge s $. The left-hand-side is 
 \begin{align*}
 P(X^-_{t+1}=i_{s-1}| \mathcal{F'}_s^t)=\frac{\mathbb{P'}^-([i_{t}, \dots, i_{s}, i_{s-1}])}{ \mathbb{P'}^-([i_{t}, \dots, i_{s}])}=\frac{\vp_{i_{s-1}}(s-1)M_{i_{s-1}i_s}}{\vp_{i_s}(s)}.
 \end{align*}
The right-hand-side is 
 \begin{align*}
 P(X^-_{t+1}=i_{s-1}| X^-_t)=\frac{\mathbb{P'}^-([i_{s}, i_{s-1}])}{\mathbb{P'}^-([i_{s}])}=\frac{\vp_{i_{s-1}}(s-1)M_{i_{s-1}i_s}}{\vp_{i_s}(s)}.
 \end{align*}

 Moreover
 \begin{align*}
\mathbb{P'}^-([i_{t}, i_{t-1}, \dots, i_{s}])&=\vp_{i_s}(s) M_{i_{s}i_{s+1}}\dots M_{i_{t-1}i_t} \\
&=\vp_{i_t}(t)\underbrace{\Big(\frac{\vp_{i_{t-1}}(t-1)M_{i_{t-1} i_{t}}}{\vp_{i_{t}}(t)}\Big)}_{M^-_{i_ti_{t-1}}(t)}\dots \underbrace{\Big(\frac{\vp_{i_s}(s)M_{i_s i_{s+1}}}{\vp_{i_{s+1}}(s+1)}\Big)}_{M^-_{i_{s+1}i_s}(s+1)}.
\end{align*}
the probability measure can be rewritten as the Markov measure with the transition matrix at time $t$, $M^-_{ij}(t)=\frac{\vp_{j}(t-1) M_{ji}}{\vp_{i}(t)}$.
 \end{proof}

 So this time-reversed process of a Markov chain is time-inhomogeneous Markov process. One can check $M^-_{ij}(t)$ indeed is a Markov matrix.
  In particular, for the stationary MC, the transition matrix for the time-reversed process is $M^-_{ij}=\frac{\vpi_j M_{ji}}{\vpi_i}$. The stationary distribution of the time-reversed process is also $\vpi$. 
 
 Since $\vp_i(0)>0$, $\mathbb{P}'_{[0, t]}$ is absolutely continuously with respect to $\mathbb{P}'^-_{[0,t]}$.
 Then the relative entropy of the measure of Markov chain with respect to the measure of time-reversed process, $H(\mathbb{P'}_{[0,t]}, \mathbb{P'}^-_{[0,t]})$ is 
\begin{align}
\label{rel-path-ent}
H\left(\mathbb{P'}_{[0,t]}, \mathbb{P'}^-_{[0,t]}\right)=\sum_{i_0, \dots, i_{t}\in\mathscr{S}}\vp_{i_0}(0)M_{i_0i_{1}}\dots M_{i_{t-1}i_{t}}\log\Big(\frac{\vp_{i_0}(0)M_{i_0i_{1}}\dots M_{i_{t-1}i_{t}}}{\vp_{i_{t}}(0)M_{i_{t}i_{t-1}}\dots M_{i_{1}i_0}}\Big).
\end{align}
Similarly, it relates to the stationary entropy production rate of MC via 
\begin{align}
\label{ep-def}
e_p=\lim_{t\rightarrow +\infty}\frac{1}{t}H\left(\mathbb{P'}_{[0,t]}, \mathbb{P'}^-_{[0,t]}\right).
\end{align}
So the $e_p$ is intuitively understood as the asymptotic average of entropy produced per step with respect to its time-reversed probability. It is also a property for a stationary MC. It has the following explicit expression \cite{jqq}.
\begin{theorem} 
\begin{align}\label{epr}
e_p=\sum_{i,j \in  \mathscr{S}} \vpi_iM_{ij}\log\Big(\frac{M_{ij}}{M_{ji}}\Big).
\end{align}
\end{theorem}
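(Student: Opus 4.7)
The plan is to reduce the path-space relative entropy $H(\mathbb{P'}_{[0,t]},\mathbb{P'}^-_{[0,t]})$ in (\ref{rel-path-ent}) to a time average of a one-step ``entropic asymmetry'' $\log(M_{ij}/M_{ji})$, and then to invoke the convergence $\vp(k)\to\vpi$ guaranteed by irreducibility and aperiodicity of $M$. First I would split the logarithm inside (\ref{rel-path-ent}) as
\begin{align*}
\log\frac{\vp_{i_0}(0)M_{i_0 i_1}\cdots M_{i_{t-1}i_t}}{\vp_{i_t}(0)M_{i_t i_{t-1}}\cdots M_{i_1 i_0}} = \log\vp_{i_0}(0) - \log\vp_{i_t}(0) + \sum_{k=0}^{t-1}\log\frac{M_{i_k i_{k+1}}}{M_{i_{k+1} i_k}},
\end{align*}
so that every piece is either boundary data coming from the initial distribution or a single-step log-ratio of forward versus reverse transition probabilities. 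The hypothesis $M_{ij}>0\Leftrightarrow M_{ji}>0$ together with $\vp_i(s)>0$ keeps all these logarithms well defined (under the convention $0\log 0=0$) and guarantees $\mathbb{P'}_{[0,t]}\ll\mathbb{P'}^-_{[0,t]}$, so that (\ref{rel-path-ent}) is a bona fide Radon--Nikodym expression.

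Next I would take the $\mathbb{P'}$-expectation term by term. Marginalizing via $\sum_{i_0,\dots,i_{t-1}}\vp_{i_0}(0)M_{i_0 i_1}\cdots M_{i_{t-1}i_t}=\vp_{i_t}(t)$, the two boundary pieces contribute
\begin{align*}
\sum_{i\in\mathscr{S}}\vp_i(0)\log\vp_i(0)\;-\;\sum_{i\in\mathscr{S}}\vp_i(t)\log\vp_i(0),
\end{align*}
which is uniformly bounded in $t$ because $\mathscr{S}$ is finite and $\vp_i(0)>0$. The bulk, using the Markov identity $\Pr(X_k=i,X_{k+1}=j)=\vp_i(k)M_{ij}$, becomes
\begin{align*}
\sum_{k=0}^{t-1}\sum_{i,j\in\mathscr{S}}\vp_i(k)\,M_{ij}\log\frac{M_{ij}}{M_{ji}},
\end{align*}
that is, a $t$-fold sum of the asymmetry functional evaluated at the successive marginals $\vp(k)$.

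Irreducibility and aperiodicity of $M$ give $\vp(k)\to\vpi$ as $k\to\infty$ (recalled at the opening of Sec.~\ref{sec-4}), so each summand above converges to $\sum_{i,j}\vpi_i M_{ij}\log(M_{ij}/M_{ji})$; a standard Ces\`aro-mean argument transfers this to the time-averaged sum, and the $O(1)$ boundary contribution is killed by the $1/t$ prefactor from (\ref{ep-def}). This yields the claimed identity
\begin{align*}
e_p \;=\; \sum_{i,j\in\mathscr{S}}\vpi_i\,M_{ij}\log\frac{M_{ij}}{M_{ji}}.
\end{align*}
The only delicate bookkeeping I anticipate is verifying that the boundary term stays $O(1)$---this is exactly why positivity of $\vp(0)$ across the finite state space is needed---and that $M_{ij}/M_{ji}$ never becomes $0/0$ on the support of $\vp_i(k)M_{ij}$, which is exactly the symmetric-support hypothesis. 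Everything else is a routine consequence of the Markov factorization and the mixing of $M$.
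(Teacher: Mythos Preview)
Your proposal is correct and follows essentially the same approach as the paper: decompose the log Radon--Nikodym derivative into the boundary piece $\sum_i(\vp_i(0)-\vp_i(t))\log\vp_i(0)$ plus the bulk $\sum_{s=0}^{t-1}\sum_{i,j}\vp_i(s)M_{ij}\log(M_{ij}/M_{ji})$, then pass to the limit using $\vp(s)\to\vpi$. The paper's version is terser and leaves the Ces\`aro step and the $O(1)$ boundary justification implicit, whereas you have spelled these out.
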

\begin{proof}
From the Definition \ref{ep-def}
\begin{align*}
e_p&=\lim_{t\rightarrow +\infty}\frac{1}{t}H\left(\mathbb{P'}_{[0,t]}, \mathbb{P'}^-_{[0,t]}\right) \\
&=\lim_{t\rightarrow+\infty}\frac{1}{t}\left\{\sum_{i\in\mathscr{S}}\big(\vp_{i}(0)-\vp_{i}(t)\big)\log \vp_{i}(0)+\sum_{s=0}^{t-1}\sum_{i,j\in  \mathscr{S}}\vp_{i}(s)M_{ij}\log\Big(\frac{M_{ij}}{M_{ji}}\Big)\right\} \\
&=\sum_{i,j\in  \mathscr{S}}\vpi_i M_{ij}\log\Big(\frac{M_{ij}}{M_{ji}}\Big).
\end{align*}

\end{proof}

	One in fact has a result stronger than (\ref{ep-def}):  The
relative entropy in (\ref{rel-path-ent}) of a step is 
\begin{eqnarray}
 \dbar H\Big(\mathbb{P}'_{[0,t]}, \mathbb{P}'^-_{[0,t]}\Big) 
   &\equiv& H\Big(\mathbb{P}'_{[0,t+1]}, \mathbb{P}'^-_{[0,t+1]}\Big)-H\Big(\mathbb{P}'_{[0,t]}, \mathbb{P}'^-_{[0,t]}\Big)
\nonumber\\
	&=&\sum_{ij}\vp_i(t)M_{ij}\log\left(
          \frac{\vp_i(0)M_{ij}}{\vp_j(0)M_{ji}}\right).
\label{eq-29}
\end{eqnarray}
One can show that the asymptotic relative entropy
of a step in (\ref{eq-29}) is also the $e_p$. 

Note that for a stationary MC, the entropy production rate is exactly the time-averaged relative entropy, i.e, $e_p=\frac{1}{t}H\big(\mathbb{P'}_{[0,t]}, \mathbb{P'}^-_{[0,t]}\big)$ for any $t>0$.
There are many other equivalent expressions for the entropy production rate. For instance, 
\[
  e_p=\sum_{i,j} \vpi_iM_{ij}\log \Big(\frac{\vpi_i M_{ij}}{\vpi_j M_{ji}}\Big), \  \text{ or }  \
e_p=\sum_{i,j} \vpi_iM_{ij}\log \Big(\frac{M_{ij}}{M^-_{ij}}\Big),
\]
or
\[
     e_p=\frac{1}{2}\sum_{i,j} (\vpi_iM_{ij}-\vpi_jM_{ji})\log\Big(\frac{\vpi_iM_{ij}}{\vpi_jM_{ji}}\Big).
\]
So the entropy production rate $e_p=0$ if and only if the MC is detailed balance, $\vpi_iM_{ij}=\vpi_jM_{ji}$.

	As we have discussed earlier, both 
$\ \dbar H_{SK}\big(\mathbb{P}'_{[0,t]}\big)$ and 
$\ \dbar H\big(\mathbb{P}'_{[0,t]}, \mathbb{P}'^-_{[0,t]}\big)$ 
belong to the same class of stochastic quantitie. 
As their asymptotic limits, the metric entropy $h_{MC}$ 
and the entropy production rate $e_p$ characterize the average randomness  generated in the dynamic stepping from $t$ to $t+1$ and average dynamic asymmetry with 
respect to time reversal, respectively. 
In the same class, there is another 
non-negative quantity:
\begin{equation}
 \sum_{i,j}
        \vp_i(t-1)M_{ij}\ln\left(\frac{\vp_i(t-1)M_{ij}}{
        \vp_j(t)M^-_{ji} }\right) = -\Delta H\Big(\vp(t-1),\vpi \Big),
\end{equation}
in which $M_{ji}^-=\vpi_iM_{ij}/\vpi_j$ is the transition
matrix for the time-reversed process. 
The right-hand-side is actually the
$\Delta H\big(\vp(t-1),\vpi\big)$ in Theorem \ref{theo-voigt}.
It signifies non-stationarity,
e.g., asymmetry with respect to translation in time \cite{qian-jmp}. The term inside the logarithm $\frac{\vp_i(t-1)M_{ij}}{
        \vp_j(t)M^-_{ji} }$ goes to 0 and the term $\vp(t)$ goes $\vpi$ as $t\rightarrow +\infty$. Therefore, the asymptotic limit $ \Delta H\big(\vp(t-1),\vpi \big)$ goes to 0.

\subsection{Cycle Distributions and RDS}
\label{sec-4.3}

Besides the expression of the entropy production rate in terms of the transition matrix $M$ and its corresponding stationary distribution $\vpi$, a different representation can be given in terms of a collection of cycles $\mathcal{C}$ and weights $\{w_c: c\in \mathcal{C}\}$ on these cycles. These are regarded as cycle distributions of the MC \cite{kalpazidou2007}. In addition, there is an associated graph-based diagram method to compute the weight $w_c$ which was first discovered by T. L. Hill \cite{hill2004} and proved by Qians \cite{jqq}. In the setting of maximum entropy RDS, this graphical method can be further formulated as a function on the probability coefficient of the deterministic map with the single attractor.
The entropy production rate $e_p$ of the MC can be expressed in terms of the cycle weights as,
\begin{align} \label{epr-cycle}
e_p=\sum_{c\in\mathcal{C}}w_c\log\frac{w_c}{\ w_{c_-}}.
\end{align}
Here $c_-$ denotes the reversed cycle of $c$. 
The previous proof  given in \cite{jqq} is quite involved.  Here 
we will give a shorter, combinatorial proof for Eq. (\ref{epr-cycle}). 

We start with discussing a related graphical method to solve the invariant distribution for the MC, i.e. to solve $\vpi$ for $\vpi M=\vpi$. The MC can be viewed as a directed graph with the transition probability as edge weight. Firstly, construct the complete set of spanning directed rooted {\em trees} which all edges are directed toward the root. A tree has the maximum possible edges without forming any loops. So for a tree with $n$ nodes has $n-1$ edges and each node, except the root, has exactly one outgoing edge. If one views the directed graph as a discrete map for a dynamical system, the directed rooted tree gives a single fixed point. Second, assign the weight of previous directed rooted tree $T$ as the product of its edge weights, $e(T)$. In fact, this weight connects with the coefficient of the corresponding map $\alpha$ (with root goes to itself) under maximum entropy RDS as follows, $Q(\alpha)=e(T)M_{ii}$, where $i$ is the root state. Finally, the weight of a set of graphs is the sum of their weights. Then the invariant distribution $\vpi$ can be expressed by $e(T)$ \cite{King1956, hill2004, Caplan}.
\begin{theorem} \label{Hill-theorem}
 The invariant distribution for the irreducible and aperiodic MC is given by 
\begin{align}\label{Hill-pi}
\vpi_i=\frac{e(\mathcal{T}_i)}{\Sigma}, \ i=1,\dots, n.
\end{align}
where $\mathcal{T}_i$ is the set of directed rooted trees whose root is state $i$ and the normalization factor $\Sigma=\sum_{i=1}^n e(\mathcal{T}_i)$.
\end{theorem}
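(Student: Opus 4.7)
My plan is to set $\tilde\pi_i := e(\mathcal{T}_i)$ and verify directly that $\sum_j \tilde\pi_j M_{ji}=\tilde\pi_i$ for every $i\in\mathscr{S}$. Since $M$ is irreducible and aperiodic, its left $1$-eigenspace is one-dimensional, and irreducibility guarantees at least one spanning in-arborescence rooted at each state whose edges all have positive $M$-weight, so $\tilde\pi_i>0$; by uniqueness of the normalized stationary distribution, $\vpi_i=\tilde\pi_i/\Sigma$ will then follow.

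The heart of the argument is the identity
\begin{equation}\label{key-prop}
e(\mathcal{T}_i) \;=\; \sum_{j \in \mathscr{S}} e(\mathcal{T}_j)\, M_{ji}.
\end{equation}
Using the row-stochastic property $M_{ii}=1-\sum_{k\neq i}M_{ik}$, (\ref{key-prop}) is equivalent to
\[
\sum_{j\neq i} e(\mathcal{T}_j)\,M_{ji} \;=\; \sum_{k\neq i} e(\mathcal{T}_i)\,M_{ik},
\]
and I will prove this by a weight-preserving bijection with a common combinatorial family: a \emph{unicyclic functional graph with cycle through $i$}, defined as a directed graph on $\mathscr{S}$ in which every vertex has out-degree one and which contains a unique directed cycle, of length $\geq 2$, passing through $i$; weight it by the product of the $n$ edge-weights $M_{ab}$.

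Starting from the LHS, each pair $(T,\, j\!\to\! i)$ with $T\in\mathcal{T}_j$, $j\neq i$, produces such a graph: attaching $j\!\to\! i$ closes the unique $T$-path from $i$ to $j$ into a cycle through $i$ whose cycle-predecessor of $i$ is $j$. Conversely, the cycle-predecessor of $i$ in any such graph is unique, and deleting the corresponding edge returns a tree rooted at that predecessor. The RHS is handled by the dual construction using the unique cycle-successor $k$ of $i$ and pairs $(T',\,i\!\to\! k)$ with $T'\in\mathcal{T}_i$, $k\neq i$. Both bijections are manifestly weight-preserving, which yields (\ref{key-prop}) and hence $\tilde\pi M=\tilde\pi$.

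The one technical point requiring care is verifying that in the inverse LHS bijection, $G\setminus\{j\!\to\! i\}$ really is a tree all of whose edges point toward $j$. Since $G$ has $n$ vertices and $n$ edges with a unique cycle containing $j\!\to\! i$, deletion of that edge leaves $n{-}1$ edges and no cycle; weak connectivity survives via the remaining cycle path $i\!\to\!\cdots\!\to\! j$, so the result is an undirected tree; since only $j$ has lost its out-edge, $j$ is the unique vertex of out-degree zero and every other vertex retains its unique out-edge, forcing the tree to be rooted at $j$ with all edges oriented toward it. Given (\ref{key-prop}), normalization by $\Sigma$ and uniqueness of the stationary distribution finish the proof.
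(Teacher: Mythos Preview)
Your proof is correct and follows essentially the same approach as the paper's: you rewrite the stationarity equation as $\sum_{j\neq i} e(\mathcal{T}_j)M_{ji}=\sum_{k\neq i} e(\mathcal{T}_i)M_{ik}$ and show that both sides equal the total weight of the unicyclic functional graphs whose unique cycle passes through $i$, via the add-an-edge/delete-an-edge bijection (the paper calls this family $\mathcal{G}_i$). Your write-up is in fact more careful than the paper's terse appendix---you spell out the inverse bijection, verify that deletion yields an in-arborescence, and invoke irreducibility/aperiodicity for positivity and uniqueness---but the underlying combinatorial identity and its proof are the same.
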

The original proof was based on Cramer's rule but Hill discovered an elegant proof which is included in Appendix \ref{appendix-A}. The connection between two proofs is the matrix-tree theorem and is discussed in Appendix \ref{appendix-B}. From the proof, it gives the insight for the following proposition to connect the RDS picture.

We call the attractor of the deterministic map $\alpha$, $\mathcal{A}(\alpha)$, is {\it single} if the attractor is either exactly one fixed point or one limit cycle.
The size of the single attractor $\|\mathcal{A}(\alpha)\|$ is the period of the limit cycle or just one for the fixed point. If the attractor is not single, denote $\mathcal{A}(\alpha)=\emptyset$ and the size of it is 0. 

Let $\mathcal{G}_i$ be the set of directed graphs that has exactly one limit cycle and $i$ is contained in that cycle. Unlike the set of directed rooted trees $\mathcal{T}_i$,  for any $G\in \mathcal{G}_i$, $e(G)$ is exactly the probability of the corresponding deterministic map under maximum entropy RDS. 
  So the set of directed graph corresponds to the deterministic maps with single attractor is $\cup_i \{\mathcal{G}_i,\mathcal{T}_i\}$. For convenience, denote $\mathcal{A}(G)$ as the single attractor of this corresponding deterministic map for $G\in \cup_i \{\mathcal{G}_i,\mathcal{T}_i\}$.

\begin{proposition}\label{sigma-rds}
The normalization factor $\Sigma$ is equal to the expected size of the single attractor of the deterministic map under maximum entropy RDS, i.e, 
\begin{align}
\sum_{i=1}^n e(\mathcal{T}_i)=\mathbb{E}^Q(\|\mathcal{A}(\alpha)\|).
\end{align}
\end{proposition}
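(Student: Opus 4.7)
The plan is to establish a bijection between functional graphs with a single attractor and pairs consisting of a rooted tree plus a choice of outgoing edge from the root, then use the row-stochasticity of $M$ to collapse the count.

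First I would set up the dictionary. Any $\alpha \in \Gamma$ corresponds to a functional graph with edges $\{j \to \alpha(j)\}$, and $Q(\alpha) = \prod_j M_{j,\alpha(j)}$ equals the product of edge weights. A single attractor is exactly a single cycle of some length $k \ge 1$ (including self-loops), with all other nodes feeding into it, and $\|\mathcal{A}(\alpha)\| = k$. Write $\mathcal{F}_i = \mathcal{T}_i \cup \mathcal{G}_i$ for the set of graphs whose unique cycle contains $i$ (identifying $\mathcal{T}_i$ with its augmentation by the self-loop at $i$). A graph with cycle of length $k$ lies in exactly $k$ of the $\mathcal{F}_i$'s, so
\begin{equation*}
\sum_{i=1}^n \sum_{G \in \mathcal{F}_i} e(G) \;=\; \sum_{G \text{ single attractor}} \|\mathcal{A}(G)\|\, e(G) \;=\; \mathbb{E}^Q\bigl(\|\mathcal{A}(\alpha)\|\bigr).
\end{equation*}

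The key step is the bijection $\Phi: \mathcal{T}_i \times \mathscr{S} \to \mathcal{F}_i$ defined by $\Phi(T,j) = T \cup \{i \to j\}$. Since every non-root node of $T$ already has one outgoing edge, adding the edge $i \to j$ produces a functional graph; the unique path from $j$ back to $i$ in $T$, closed by the new edge $i \to j$, is the cycle (a self-loop when $j = i$), and the rest of $T$ still flows into this cycle. The inverse removes the outgoing edge from $i$, breaking the single cycle and leaving a spanning tree oriented toward $i$. Under $\Phi$, edge weights factor as $e(\Phi(T,j)) = e(T) \cdot M_{ij}$.

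Summing over the bijection and using $\sum_{j \in \mathscr{S}} M_{ij} = 1$,
\begin{equation*}
\sum_{G \in \mathcal{F}_i} e(G) \;=\; \sum_{T \in \mathcal{T}_i} e(T) \sum_{j \in \mathscr{S}} M_{ij} \;=\; e(\mathcal{T}_i),
\end{equation*}
and substituting into the earlier display yields $\mathbb{E}^Q(\|\mathcal{A}(\alpha)\|) = \sum_{i=1}^n e(\mathcal{T}_i) = \Sigma$. The one delicate point is verifying that $\Phi$ lands in $\mathcal{F}_i$ and is invertible; this is the classical observation that in a functional graph the cycle is unique and breaking any cycle edge produces a tree directed toward the severed vertex. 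Once this is granted, the rest is bookkeeping.
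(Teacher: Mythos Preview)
Your proof is correct and follows essentially the same approach as the paper: both use row-stochasticity together with the add/remove-edge bijection between rooted trees at $i$ and single-attractor functional graphs whose cycle contains $i$, then finish with the same double-counting of each such graph by its cycle length. The only cosmetic difference is that the paper separates the self-loop case $j=i$ from the proper-cycle case $j\neq i$ (invoking the Appendix~A identity $\sum_{j\neq i}M_{ij}\,e(\mathcal{T}_i)=e(\mathcal{G}_i)$ for the latter), whereas you treat both uniformly via the single bijection $\Phi:\mathcal{T}_i\times\mathscr{S}\to\mathcal{F}_i$.
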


\begin{proof}

From appendix \ref{appendix-A}, we know $\sum_{\substack {j=1 \\ j\ne i}}^nM_{ij}e(\mathcal{T}_i)=e(\mathcal{G}_i)$. So 
\begin{align*}
\sum_{i=1}^n e(\mathcal{T}_i)=\sum_{i=1}^n \sum_{j=1}^n M_{ij}e(\mathcal{T}_i)=\sum_{i=1}^n \Big(e(\mathcal{G}_i)+M_{ii}e(\mathcal{T}_i)\Big).
\end{align*}
 $\sum_{i=1}^n M_{ii}e(\mathcal{T}_i)$ is the probability of the deterministic maps whose attractor is exactly one fixed point. It can be rewritten as $\sum_{T\in \cup_i \mathcal{T}_i} M_{\mathcal{A}(T)\mathcal{A}(T)}e(T) $.  Here $\mathcal{A}(T)$ is the root state. 
 For any $G\in \mathcal{G}_i$, $e(G)$ is counted $\|\mathcal{A}(G)\|$ times in $\sum_{i=1}^n e(\mathcal{G}_i)$. So it can rewritten as $\sum_{i=1}^n e(\mathcal{G}_i)= \sum_{G\in \cup_i \mathcal{G}_i}\|\mathcal{A}(G)\| e(G)$ and can be interpreted as the expected size of the single limit cycle. In total, it is the expected size of the single attractor of the deterministic map under maximum entropy RDS. 
 
\end{proof}


The MC will generate an infinite sequence of cycles almost surely. The set of cycles $\mathcal{C}$ contains all possible directed cycles along almost all sample paths of $X_t$ and the weight $w_c$ is the mean occurrences of the directed cycle $c$ for almost all sample paths.  The main reason to introduce this graphic method is it can be extended to find the cycle weight $w_c$. Here we present the  method, the basic idea of the proof and the insight in RDS picture. 

Start with the sample sequence $\omega$, decompose cycles along the sequence by discarding the cycles formed at step $t$ and keep the track of the remaining states in the sequence. The remaining sequence is called {\it derived chain} $\eta_t(\omega)$ and $w_{c,t}(\omega)$ is the {\it number of occurrences }of the cycle $c$ up to step $t$. The average rate of occurrence up to time $t$ is $\frac{w_{c,t}(\omega)}{t}$. The weight also enjoys the ergodicity, i.e. $\lim_{t\rightarrow+\infty} \frac{w_{c,t}(\omega)}{t}=\omega_c$, almost surely.
 The precise definition and the proof of the ergodicity is given in \cite{jqq}. Here is one example to illustrate the idea. If we give the trajectory $(2, 1, 3, 1, 3,1, 1, 2, 2, \dots)$ of $X_t(\omega)$, where the transition matrix is 
\[
       M=\begin{bmatrix} M_{11} & M_{12} & M_{13} \\ M_{21} & M_{22} & M_{23}\\ M_{31} &M_{32} &M_{33} \end{bmatrix}
\]  
The derived chain dynamics and cycles are counted as in Table \ref{table-1}. 

\begin{table}
\begin{tabular}{cccccccccc}

$t$ & 0&1 &2&3&4&5 &6&7&8 \\[3pt]
$X_t(\omega)$ &2&1&3&1&3&1 &1&2&2\\[3pt]
$\eta_t(\omega)$ & [2]& [2,1]&[2,1,3] &[2,1]&[2,1,3]&[2,1]&[2,1]&[2]&[2]\\[3pt]
$cycles$ & &&&(1,3)& &(3,1) &(1) &(1,2) &(2)\\
\smallskip
\end{tabular}
\caption{The derived chain $\eta_t$ and the cycles formed for this sample trajectory $X_t$ \cite{jqq}.}
\label{table-1}
\end{table}

Note the self-loop $2\rightarrow 2$ here is regarded as a degenerated cycle. Without loss of generality, we include such self-loops to $\mathcal{C}$. Cycles are recorded by the ordered sequence $c=(i_1, \dots, i_t)$ with  
$i_s$ are distinct for $1\le s\le t$. The reverse of the cycle $(i_1, \dots, i_t)$ is defined as $c_-=(i_t, \dots, i_1)$. So $(1,3)$ and its reverse $(3,1)$ are the same cycle, but $(2,1,3)$ and its reverse $(3,1,2)$ are different. These cycles are the single attractors of deterministic maps, $\mathcal{C}=\mathcal{A}(\cup_i \{\mathcal{G}_i,\mathcal{T}_i\})$.

In fact, the dynamic of the derived chain itself also deserves a look. $\eta_t$ follows another MC with state space as all possible distinct ordered sequences. From the example, one can see that the first element in the derived chain state is always the initial state. Here the MC for the derived chain has 15 states. The transition matrix $\widetilde{M}$ is reducible with 3 classes based on the initial state. For the class starting with state 2, the set of all possible states are $[ \mathscr{S}]_{2}=\{[2], [2,1],[2,3], [2,1,3],[2,3,1]\}$. The submatrix of $\widetilde{M}$ on $[ \mathscr{S}]_{2}$ is
\begin{align} \label{M2}
\widetilde{M}^{[2]}=\begin{blockarray}{cccccc}
&$[2]$ &$[2,1]$ &$[2,3]$ &$[2,1,3]$& $[2,3,1]$ \\
\begin{block}{c(ccccc)}
$[2]$ & M_{22} & M_{21} & M_{23} && \\
  $[2,1]$ & M_{12} & M_{11} &  &M_{13}&&\\
  $[2,3]$ &M_{32} &  &M_{33} &&M_{31}\\
  $[2,1,3]$ &M_{32}& M_{31} & & M_{33} & \\
  $[2,3,1]$ &M_{12}&& M_{13} &&M_{11} \\
\end{block}
\end{blockarray}
\end{align}
The directed graph corresponds to $\widetilde{M}^{[2]}$ is shown in Fig. \ref{figure-ex1}. There is another example of derived chain dynamics in Fig. \ref{figure-ex2}. The transition matrix of the MC is 
\begin{align}
M=\begin{bmatrix} 0 & M_{12} & 0 & M_{14} \\ M_{21} & 0 & M_{23}& M_{24} \\ 0 &M_{32} &0 &M_{34} \\ M_{41} & M_{42} & M_{43} & 0 \end{bmatrix}
\end{align}
Here are the properties of the derived chain dynamics on the connectivity and cycle formation. Reader can refer Fig. \ref{figure-ex1} and Fig. \ref{figure-ex2} for more intuitive ideas. Denote the size of the derived chain state $[i_1, \dots, i_t]$ as $t$. Then the largest possible size of the derived chain state is $n$. 

First, the number of states in derived chain is much more than the original MC and not every state is connected in derived chain dynamics even when the original MC is completely connected. The derived chain state with size of $t$ can only go to the state with size of $s\le \min (t+1, n)$. There is no connection between states with the same size except with itself. 
Each state with the last element $i_t$ implies there is a path in the original MC from $i_1$ to $i_t$ without forming a cycle. So the number of such paths is the same as number of states with the last element $i_t$. Furthermore, Each states with size $n$ gives a Hamiltonian path in the original MC from $i_1$ which is the path from $i_1$ visits each state exactly once. 
Since the last element of the derived chain state is the current state in the original MC, the weight of each edge $[i_1, \dots, i_t][i_1, \dots, i_s]$ if exists, is the transition probability $M_{i_ti_s}$.  
 Therefore the invariant distribution for the derived chain state $\vPi^{i_1}([i_1, \dots, i_t])$ satisfies the following equation, 
\begin{align}\nonumber
\vPi^{i_1}([i_1, \dots, i_t])&=\vPi^{i_1}([i_1, \dots, i_{t-1}])M_{i_{t-1}i_t}+\vPi^{i_1}([i_1, \dots, i_t])M_{i_ti_t} \\  \label{JQQ-Pi-eq}
&+\sum_{j_1,\dots,j_r} \vPi^{i_1}([i_1, \dots, i_t, j_1,\dots,j_r])M_{j_ri_t},
\end{align}
where $j_1,\dots, j_r$ have no common elements with $i_1,\dots, i_{t}$.
In addition, since the original MC is ergodic, the derived chain is also ergodic on each irreducible class $[\mathscr{S}]_{i_1}$.  Then the invariant distribution of the derived chain should satisfy the equation (\ref{JQQ-Pi-eq}) and is unique up to the normalization. The detailed proof is in Theorem \ref{JQQ-Pi-theo}.

 Second, in the original MC the current state cannot tell which cycle could be possibly formed at next step, but the derived chain state can. 
The derived chain has the past history on the path from $i_1$ to the current state of the original MC after removing cycles. Each time the derived chain state $[i_1, \dots, i_t]$ goes to the state with the same or less size $[i_1, \dots, i_s], s\le t$, a cycle $(i_s, i_{s+1}, \dots, i_t)$ in the original MC sequence is formed. In particular, any cycle contained with $i_1$, i.e, $(i_1, \dots, i_t)$, can only be formed by the derived chain state $[i_1, i_2, \dots, i_t]$ going to $[i_1]$.

From these properties, one can pick the suitable class to calculate the weight of cycles. In particular, for the cycle $c=(i_1, i_2, \dots, i_t)$, it is much convenient to use the class $[ \mathscr{S}]_{i_1}$, i.e, the initial state of the original MC is $i_1$. 
 Hence the average rate of the occurrence for this cycle is same as the average frequency of the pair $[i_1, \dots, i_t][i_1]$ in the derived chain dynamics. Since the average frequency of state $[i_1, \dots, i_t]$ goes to $\vPi^{i_1}([i_1, i_2, \dots, i_t])$, the average frequency of the pair $[i_1, \dots, i_t][i_1]$ is 
 $w_c=\vPi^{i_1}([i_1, i_2, \dots, i_t])\cdot M_{i_ti_1}$.

\begin{proposition}
The directed graph $G_2$ corresponds to the derived chain on $[ \mathscr{S}]_{i_1}$, is homomorphic to the directed graph $G_1$ to the MC on $ \mathscr{S}$.
\end{proposition}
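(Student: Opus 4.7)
The plan is to exhibit an explicit, natural graph homomorphism $\phi : V(G_2) \to V(G_1)$ and verify that it preserves directed edges together with their weights. The obvious candidate, suggested by the construction of the derived chain itself, is the projection onto the current state of the original process:
\[
   \phi\big([i_1,i_2,\dots,i_t]\big) \;=\; i_t,
\]
i.e.\ send each derived-chain state to its last (most recent) symbol. Since by definition a state in $[\mathscr{S}]_{i_1}$ is an ordered tuple of distinct elements of $\mathscr{S}$ beginning with $i_1$, the map $\phi$ is well-defined; surjectivity onto $\mathscr{S}$ (restricting attention to vertices of $G_1$ reachable from $i_1$, which is all of $\mathscr{S}$ by irreducibility) is immediate from considering the one-element states and their descendants.

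Next I would enumerate the three kinds of outgoing edges at a derived-chain vertex $[i_1,\dots,i_t]$, as listed in the text just preceding the proposition. Extension edges have the form $[i_1,\dots,i_t]\to[i_1,\dots,i_t,j]$ with $j\notin\{i_1,\dots,i_t\}$ and carry weight $M_{i_t j}$; cycle-removal edges have the form $[i_1,\dots,i_t]\to[i_1,\dots,i_s]$ with $s\le t$ and carry weight $M_{i_t i_s}$; self-loops $[i_1,\dots,i_t]\to[i_1,\dots,i_t]$ carry weight $M_{i_t i_t}$. Applying $\phi$ to source and target in each case produces the directed edge $i_t\to \phi(\text{target})$ in $G_1$, and in every case its weight in $G_1$ is precisely $M_{i_t,\phi(\text{target})}$, matching the weight of the $G_2$-edge. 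Hence $\phi$ respects the weighted adjacency relation.

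To finish, I would remark that the induced map on edges is in general many-to-one: distinct derived-chain states can have the same last symbol, so several edges in $G_2$ collapse onto one edge of $G_1$. This is exactly what is expected of a graph homomorphism (as opposed to an isomorphism), and it is also the combinatorial reason the derived-chain picture contains strictly more information than the original MC. The final statement is then that $\phi$ is a weight-preserving homomorphism $G_2\to G_1$, which is the content of the proposition.

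The main obstacle, such as it is, is not any single hard step but rather administrative: one must be careful that the definition of the derived chain forbids states with repeated symbols (so extension edges are only drawn to tuples with $j$ genuinely new), and that the class $[\mathscr{S}]_{i_1}$ is the correct vertex set on which $\phi$ acts. Once these bookkeeping points are settled, edge-and-weight preservation is a direct reading of the transition rule $\widetilde{M}$ against the transition rule $M$, as already illustrated by the block $\widetilde{M}^{[2]}$ in equation (\ref{M2}).
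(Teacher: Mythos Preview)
Your proof is correct and takes essentially the same approach as the paper: both define the homomorphism as the projection $[i_1,\dots,i_t]\mapsto i_t$ onto the last symbol and verify that edges of $G_2$ map to edges of $G_1$. Your version is more detailed (splitting into extension, cycle-removal, and self-loop edges, and adding remarks on surjectivity and weight preservation), whereas the paper handles all cases in one line via the observation that every edge in $G_2$ has weight $M_{i_ti_s}$ with $i_t,i_s$ the last symbols of source and target.
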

\begin{proof}
Consider the function from $[ \mathscr{S}]_{i_1}$ to $ \mathscr{S}$, $f([i_1, \dots, i_t])=i_t$. For any edge  $[i_1, \dots, i_t][i_1, \dots, i_s]\in E(G_2)$, the weight of the edge is $M_{i_ti_s}$ and it means there is an edge from $i_t$ to $i_s$ in $G_1$. So $f([i_1, \dots, i_t])f([i_1, \dots, i_s])\in E(G_1)$. 
\end{proof}
The Fig. \ref{figure-ex1} and Fig. \ref{figure-ex2} demonstrate the homomorphism of the directed graph $G_2$ (left) to $G_1$ (right). The homomorphism $f$ induces the equivalent class on $[ \mathscr{S}]_{i_1}$ by using the pre-image $f^{-1}(i_t)$ for any $i_t\in  \mathscr{S}$. These equivalent classes are colored in the same color in these two examples. In fact, the invariant measures on both dynamics are also connected by the pre-image of the homomorphism as shown in the following theorem.

\begin{figure}
\includegraphics[scale=0.55]{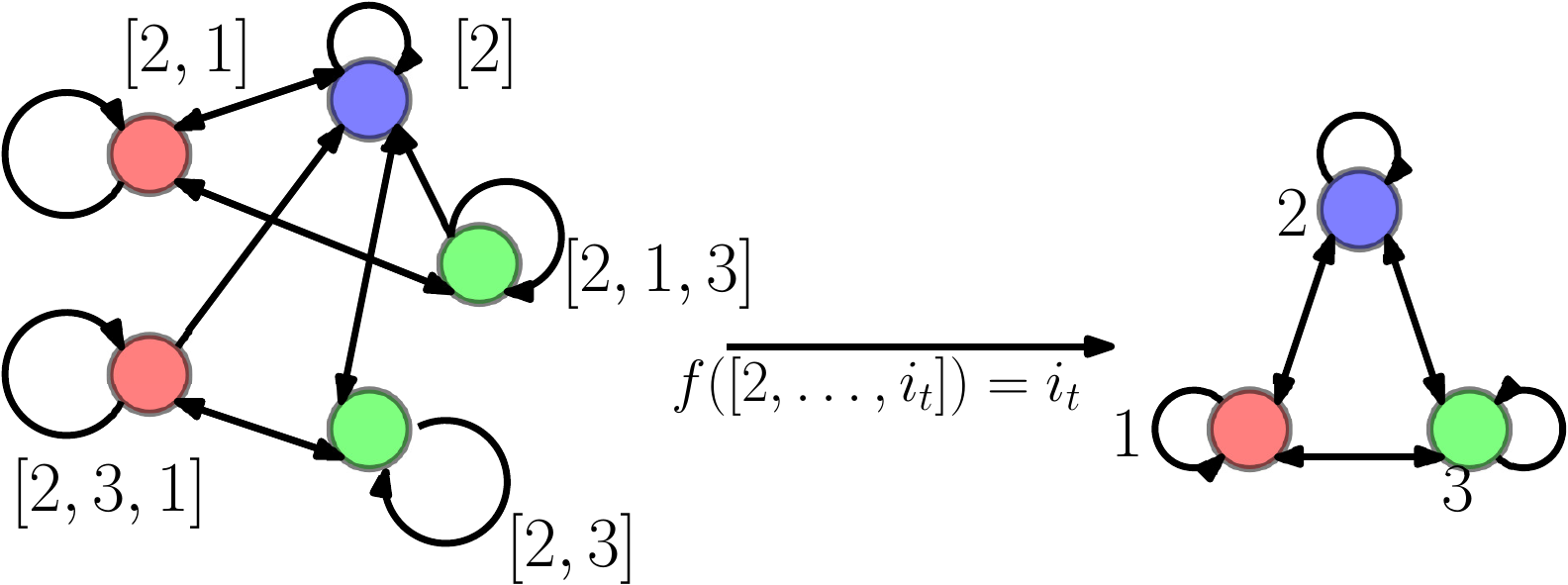}
\caption{ A 3-state completely connected MC. The transition matrix of derived chain dynamics is in (\ref{M2})}
\label{figure-ex1}
\end{figure}

\begin{figure}
\includegraphics[scale=0.5]{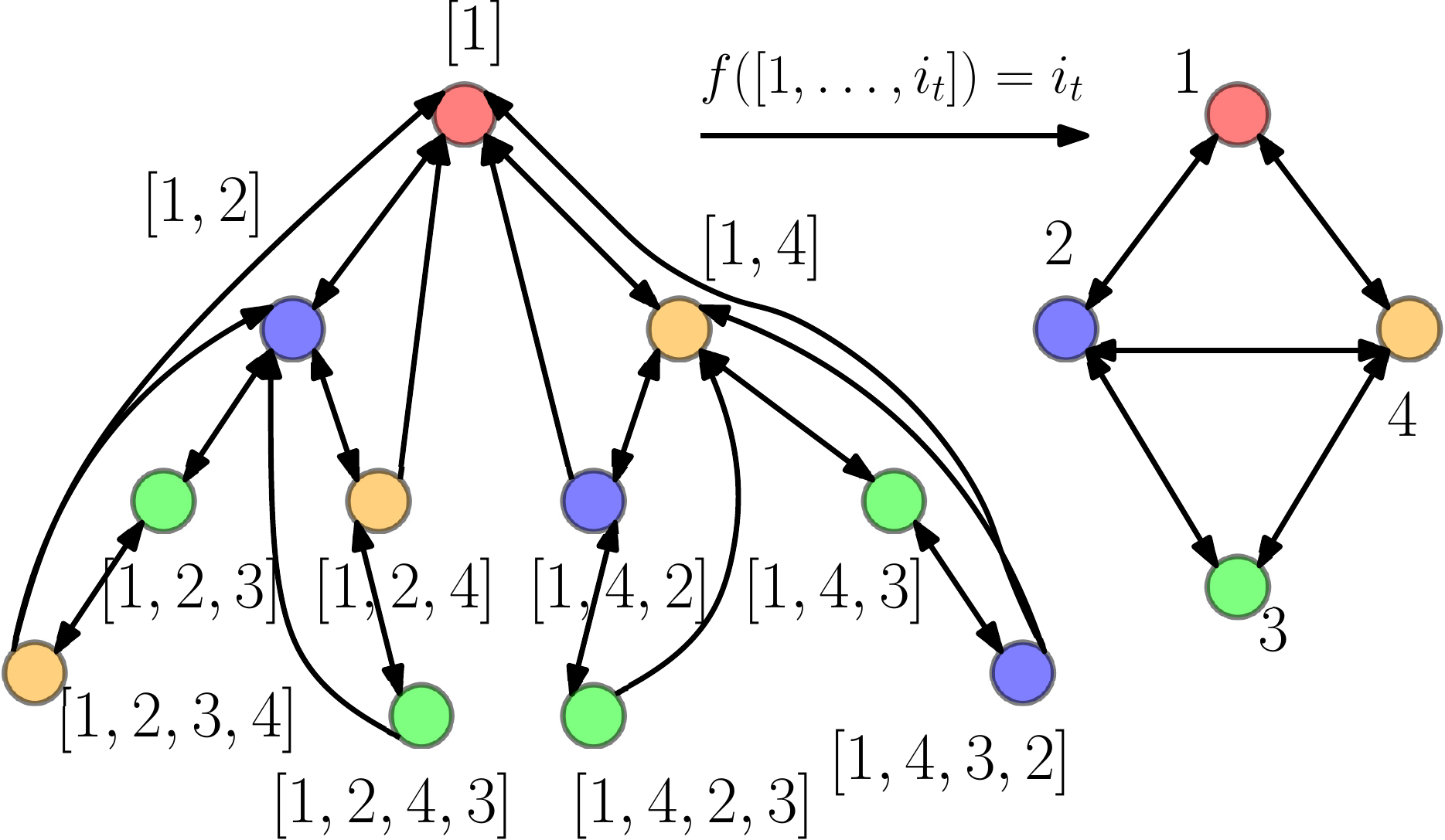}
\caption{ A 4-state MC with initial state 1. }
\label{figure-ex2}
\end{figure}

   A directed rooted tree is denoted as $T_{[i_1, \dots, i_t]}$ if it has root $i_t$ and has edges $i_1i_2, \dots i_{t-1}i_t$. By convention, if $t=1$, $T_{[i_1]}=T_{i_1}$. And $\mathcal{T}_{[i_1, \dots, i_t]}$ is the set of all such directed rooted  trees $T_{[i_1,\dots, i_t]}$. 

\begin{theorem} \label{JQQ-Pi-theo}
The invariant distribution for the derived chain dynamics is given by
\begin{align}\label{JQQ-Pi}
\vPi^{i_1}([i_1, \dots, i_t])=\frac{e(\mathcal{T}_{[i_1,\dots, i_t]})}{\Sigma}, \ [i_1, \dots, i_t]\in [ \mathscr{S}]_{i_1}. 
\end{align}
where the normalization factor is $\Sigma=\sum_{i=1}^n e(\mathcal{T}_i)$. Moreover, $\vPi^{i_1}(f^{-1}(i_t))=\vpi(i_t)$.
\end{theorem}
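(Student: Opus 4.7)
The plan is to establish that the proposed formula $\vPi^{i_1}([i_1,\dots,i_t]) = e(\mathcal{T}_{[i_1,\dots,i_t]})/\Sigma$ solves the balance equation (\ref{JQQ-Pi-eq}) by means of a combinatorial identity on tree weights, then invoke uniqueness of the stationary distribution on each irreducible class $[\mathscr{S}]_{i_1}$, and finally read off the marginal claim $\vPi^{i_1}(f^{-1}(i_t))=\vpi(i_t)$ by decomposing spanning trees rooted at $i_t$ according to their unique $i_1$-to-root path.

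The key identity I aim to prove, obtained by substituting the formula into (\ref{JQQ-Pi-eq}) and using $1-M_{i_ti_t}=\sum_{k\ne i_t}M_{i_tk}$, is
\[
 e(\mathcal{T}_{[i_1,\dots,i_t]})\sum_{k\ne i_t} M_{i_tk}
   = e(\mathcal{T}_{[i_1,\dots,i_{t-1}]})M_{i_{t-1}i_t}
   + \sum_{r\ge 1}\sum_{\substack{j_1,\dots,j_r\in\mathscr{S}\setminus\{i_1,\dots,i_t\}\\ \text{distinct}}} e(\mathcal{T}_{[i_1,\dots,i_t,j_1,\dots,j_r]})M_{j_ri_t}.
\]
I intend to interpret every term on either side as the weight $e(F)=\prod_v M_{v,\sigma(v)}$ of a \emph{functional graph} $F$ on $\mathscr{S}$ (every vertex $v$ carrying exactly one outgoing edge $v\to\sigma(v)$, so $F$ has a unique directed cycle). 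On the LHS, a pair $(\tau,k)$ with $\tau\in\mathcal{T}_{[i_1,\dots,i_t]}$ and $k\ne i_t$ is mapped to $F=\tau\cup\{i_t\to k\}$; this map enumerates precisely those $F$ whose path edges $i_s\to i_{s+1}$ $(s<t)$ are intact and whose cycle passes through $i_t$. I then plan to split these graphs by whether the cycle also contains $i_{t-1}$. If it does, removing the cycle edge $i_{t-1}\to i_t$ leaves a spanning tree rooted at $i_{t-1}$ in $\mathcal{T}_{[i_1,\dots,i_{t-1}]}$, so $F$ arises from the first RHS summand via $\tau'\mapsto\tau'\cup\{i_{t-1}\to i_t\}$. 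If $i_{t-1}$ is off the cycle, then no other path vertex can sit on the cycle either, since any $i_s$ on the cycle would force its preserved successors $i_{s+1},\dots,i_{t-1}$ onto the cycle through the path edges; consequently the cycle has the form $(i_t,j_1,\dots,j_r)$ with $j_l\in\mathscr{S}\setminus\{i_1,\dots,i_t\}$, and removing $j_r\to i_t$ yields a tree in $\mathcal{T}_{[i_1,\dots,i_t,j_1,\dots,j_r]}$, so $F$ arises from the second RHS summand. All three parameterizations encode the same product of edge weights as $e(F)$, so the weighted-count equality is immediate once the bijections are set up.

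With the balance equation verified, uniqueness of $\vPi^{i_1}$ up to normalization will follow from the ergodicity of the derived chain on $[\mathscr{S}]_{i_1}$, itself inherited from the irreducibility and aperiodicity of the original MC (every admissible derived-chain state is realizable as the record of an actual MC trajectory starting at $i_1$). The normalization constant is pinned down by the tree decomposition $\sum_{[i_1,\dots,i_t]\in[\mathscr{S}]_{i_1}} e(\mathcal{T}_{[i_1,\dots,i_t]})=\sum_{i_t\in\mathscr{S}} e(\mathcal{T}_{i_t})=\Sigma$, which holds because every spanning tree rooted at some $i_t$ contains a unique directed path from $i_1$ to its root, so it belongs to $\mathcal{T}_{[i_1,\dots,i_t]}$ for exactly one derived-chain state in $f^{-1}(i_t)$. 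The same decomposition restricted to a fixed root yields $\sum_{[i_1,\dots,i_t]\in f^{-1}(i_t)} e(\mathcal{T}_{[i_1,\dots,i_t]})=e(\mathcal{T}_{i_t})$, from which the marginal identity $\vPi^{i_1}(f^{-1}(i_t))=\vpi(i_t)$ follows immediately by Theorem~\ref{Hill-theorem}.

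The main obstacle will be setting up the combinatorial identity in the second paragraph: the three tree families on the RHS are rooted at three different vertices ($i_{t-1}$, $i_t$, and $j_r$), and matching them with the LHS hinges on identifying the right common structure, namely functional graphs whose cycle contains $i_t$ and whose off-cycle part hosts the prescribed path as a tail attached at $i_t$. Once the case split on whether $i_{t-1}$ lies on the cycle is in place, and the forcing argument rules out cycles that contain some but not all of the interior path vertices, the remaining steps---uniqueness via ergodicity, normalization via the tree-path decomposition, and the marginal identity---are essentially bookkeeping.
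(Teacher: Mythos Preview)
Your proposal is correct and follows essentially the same route as the paper: both verify the balance equation by showing that each side equals the total weight of the set $\mathcal{G}_{[i_1,\dots,i_t]}$ of functional graphs carrying the prescribed path edges and whose unique cycle passes through $i_t$, with the same add-an-edge/delete-an-edge bijections and the same case split (your ``$i_{t-1}$ on the cycle or not'' is exactly the paper's division into cases 5--6 versus 3--4, by the forcing argument you give). The ergodicity, normalization via the unique $i_1$-to-root path decomposition, and the marginal identity are handled identically in both.
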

Note: 
\begin{align} \label{eT-express}
e(\mathcal{T}_{[i_1,\dots, i_t]})=M_{i_1i_2}\dots M_{i_{t-1}i_t}e(\mathcal{T}_{i_1,\dots, i_t}),
\end{align}
 where $\mathcal{T}_{i_1,\dots, i_t}$ is the set of directed forests whose roots are $i_1, \dots, i_t$. 
\begin{proof}

The first part is to show the invariant distribution of the derived chain and the weights in (\ref{JQQ-Pi}) of the set of directed rooted trees satisfy the same system of linear equations (\ref{JQQ-Pi-eq}) and the solution is unique up to a normalization, i.e, $\vPi^{i_1}([i_1, \dots, i_t])\propto e(\mathcal{T}_{[i_1,\dots, i_t]})$. The technique is similar to the one in appendix \ref{appendix-A}. Based on Eq. (\ref{JQQ-Pi-eq}), it is equivalent with solving the following equation,  
\begin{eqnarray}
\label{JQQ-Pi-eq-mod}
\sum_{\substack {i_s=1\\i_s\ne i_t}}^{n} \vPi^{i_1}([i_1, \dots, i_t])M_{i_ti_s} &=&  \vPi^{i_1}([i_1, \dots, i_{t-1}])M_{i_{t-1}i_t}
\nonumber\\
 &+&\sum_{j_1,\dots,j_r} \vPi^{i_1}([i_1, \dots, i_t, j_1,\dots,j_r])M_{j_ri_t},
\end{eqnarray}
where $j_1,\dots, j_r$ have no common elements with $i_1,\dots, i_{t}$. 
Let $\mathcal{G}_{[i_1, \dots, i_t]}$ be the set of directed graphs that have exactly one limit cycle with $i_t$ contained and the graphs have edges $i_1i_2, \dots, i_{t-1}i_t$. 
\begin{enumerate}

\item $\mathcal{T}_{[i_1,\dots, i_t]}+i_ti_s\subset \mathcal{G}_{[i_1, \dots, i_t]}$  If a directed rooted tree $T\in \mathcal{T}_{[i_1,\dots, i_t]}$, adding an edge $i_ti_s$ will create an element $G\in \mathcal{G}_{[i_1, \dots, i_t]}$, where $i_s\ne i_t$.

\item $\mathcal{G}_{[i_1, \dots, i_t]}- i_ti_s \subset \mathcal{T}_{[i_1,\dots, i_t]}$ If a directed graph $G\in \mathcal{G}_{[i_1, \dots, i_t]}$, deleting the edge $i_ti_s$ will create an element $T\in \mathcal{T}_{[i_1,\dots, i_t]}$, where $i_s\ne i_t$.

\item $\mathcal{T}_{[i_1,\dots, i_{t-1}]}+ i_{t-1}i_t \subset  \mathcal{G}_{[i_1, \dots, i_t]}$ If a directed rooted tree $T\in \mathcal{T}_{[i_1,\dots, i_{t-1}]}$, adding an edge $i_{t-1}i_t$ will create an element $G\in \mathcal{G}_{[i_1, \dots, i_t]}$ because $i_t$ has a path to $i_{t-1}$ in the tree and now with the edge $i_{t-1}i_t$, it must have a limit cycle with $i_t$ contained. 

\item $ \mathcal{T}_{[i_1, \dots, i_t, j_1, \dots, j_r]}+j_ri_t\subset  \mathcal{G}_{[i_1, \dots, i_t]}$ If a directed rooted tree $T\in \mathcal{T}_{[i_1, \dots, i_t, j_1, \dots, j_r]}$, adding an edge $j_r i_t$ will create a limit cycle $(i_t, j_1, \dots, j_r)$, which is an element in $\mathcal{G}_{[i_1, \dots, i_t]}$. 

\item 
 If a directed graph $G\in \mathcal{G}_{[i_1, \dots, i_t]}$ has the limit cycle $(i_t, j_1, \dots, j_r)$, $r\ge 1$, where $j_1, \dots, j_r$ have no common elements with $i_1, \dots, i_{t-1}$, deleting the edge $j_r i_t$ will create an element $T\in \mathcal{T}_{[i_1, \dots, i_t, j_1, \dots, j_r]}$ since the root is $j_r$ and apart from the path from $i_1$ to $i_t$, there is another path from $i_t$ to $j_r$: $i_tj_1, \dots j_{r-1}j_r$,  

\item 
If a directed graph $G\in\mathcal{G}_{[i_1, \dots, i_t]} $ has the limit cycle $(i_s,\dots, i_t, j_1, \dots, j_r)$, $r\ge 1$, where $1\le s< t$ and where $j_1, \dots, j_r$ have no common elements with $i_1, \dots, i_t$, deleting the edge $i_{t-1}i_t$ will create an element $T\in \mathcal{T}_{[i_1,\dots, i_{t-1}]}$ because the root is $i_{t-1}$.

\end{enumerate}
From 1 and 2, we showed $\mathcal{T}_{[i_1,\dots, i_t]}+i_ti_s= \mathcal{G}_{[i_1, \dots, i_t]}$, then the RHS of Eq. (\ref{JQQ-Pi-eq-mod}) is 
$$\text{RHS}=\sum_{\substack {i_s=1\\i_s\ne i_t}}^{n}e(\mathcal{T}_{[i_1, \dots, i_t]})M_{i_ti_s}=e(\mathcal{G}_{[i_1, \dots, i_t]}).$$
 From 3-6, we showed $\{\mathcal{T}_{[i_1,\dots, i_{t-1}]}+ i_{t-1}i_t\} \cup \{\mathcal{T}_{[i_1, \dots, i_t, j_1, \dots, j_r]}+j_ri_t\}=\mathcal{G}_{[i_1, \dots, i_t]} $, then the LHS of Eq. (\ref{JQQ-Pi-eq-mod}) is $$\text{LHS}=e(\mathcal{T}_{[i_1, \dots, i_{t-1}]})M_{i_{t-1}i_t}+\sum_{j_1, \dots, j_r}e(\mathcal{T}_{[i_1, \dots, i_{t}, j_1, \dots, j_r]})M_{j_ri_t}=e(\mathcal{G}_{[i_1, \dots, i_t]}).$$ 
 Now we showed the solution $\vPi^{i_1}([i_1, \dots, i_t])=e(\mathcal{T}_{[i_1,\dots, i_t]})$ satisfies the linear equations Eq. (\ref{JQQ-Pi-eq-mod}).

Suppose the derived chain $\eta_t$ can reach $[i_1,\dots, i_t]$ from $[i_1]$, since the original MC is irreducible, $\eta_t$ can also return to $[i_1]$ from $[i_1,\dots, i_t]$. Therefore $[\mathscr{S}]_{i_1}$ is an irreducible class. Moreover, since the period of state $i_1$ is 1 in original MC, the period of state $[i_1]$ in the derived chain is also 1. Because $\Pr(\eta_t=[i_1] | \eta_0=[i_1])=\Pr(X_t=i_1 | X_0=i_1)$. We showed the derived chain is irreducible and has an aperiodic state, thus, the derived chain is ergodic on the class $[\mathscr{S}]_{i_1}$. Then $\eta$ has a unique invariant distribution $\vPi^{i_1}$ on the class $[\mathscr{S}]_{i_1}$, up to the normalization.
 
 Now we have proved $\vPi^{i_1}([i_1, \dots, i_t])\propto e(\mathcal{T}_{[i_1,\dots, i_t]})$.

\bigskip
The second part is to prove the normalization factor is $\Sigma=\sum_{i=1}^n e(\mathcal{T}_i)=e(\cup_i \mathcal{T}_i)$, which is the same as the normalization factor in calculating the invariant distribution of the original MC. By the definition, $\Sigma=\sum_{[i_1, \dots, i_t]\in [S]_{i_1}} e(\mathcal{T}_{[i_1, \dots, i_t]})=e(\cup_{i_2, \dots, i_t}\mathcal{T}_{[i_1, \dots, i_t]})$. 
If a directed rooted tree with root $i_t$, $T\in \mathcal{T}_{i_t}$, there exists a path from $i_1$ to $i_t$. If $t=1$, the path degenerates to a point. So $T\in \cup_{i_2, \dots, i_{t-1}}\mathcal{T}_{[i_1,i_2,\dots,i_{t-1},i_t]}$. If a directed rooted tree $T\in \cup_{i_2, \dots, i_{t-1}}\mathcal{T}_{[i_1,i_2,\dots,i_{t-1},i_t]}$, then $T$ has root $i_t$ and $T\in\mathcal{T}_{i_t}$. 
So we have 
\begin{align} \label{eq-set}
e(\cup_{i_2, \dots, i_{t-1}}\mathcal{T}_{[i_1,i_2,\dots,i_{t-1},i_t]})=e(\mathcal{T}_{i_t}).
\end{align}

That implies the weights of the union of set on $i_t$ are the same, i.e, $e(\cup_{i_t} \mathcal{T}_{i_t})=e(\cup_{i_2, \dots, i_t}\mathcal{T}_{[i_1, \dots, i_t]})$. This proves the normalization factor is $\Sigma=\sum_{i=1}^n e(\mathcal{T}_i)$ and the invariant distribution $\vPi^{i_1}([i_1, \dots, i_t])$ is given by Eq. (\ref{JQQ-Pi}). 

Moreover, if we divide  both hand-side of Eq. (\ref{eq-set}) by $\Sigma$, it gives the following relationship on both invariant distributions by using the pre-image of the homomorphism, $\vPi^{i_1}(f^{-1}(i_t))=\vpi(i_t)$.

\end{proof}

By using Theorem \ref{JQQ-Pi-theo}, we built a bridge between the cycle coordinates $(\mathcal{C}, w_c)$ and the maximum entropy RDS of the MC. 
\begin{corollary}
The mean number of occurrences of the cycle $c$ per step, $w_c$ is 
\begin{align}\label{JQQ-wc}
w_c=\frac{Q(\alpha: \mathcal{A}(\alpha)=c)}{\mathbb{E}^Q(\|\mathcal{A}(\alpha)\|)}, 
\end{align}
where $Q(\alpha: \mathcal{A}(\alpha)=c)$ is the probability of the deterministic map with single attractor $c$ under maximum entropy RDS. 
\end{corollary}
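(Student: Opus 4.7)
The plan is to combine the three key ingredients already assembled: the cycle-formation interpretation of transitions in the derived chain, the graphical formula of Theorem \ref{JQQ-Pi-theo} for its invariant distribution, and the RDS interpretation of the normalizing constant from Proposition \ref{sigma-rds}.

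First, I would use the identification, established in the discussion preceding Theorem \ref{JQQ-Pi-theo}, that for a cycle $c=(i_1,i_2,\dots,i_t)$ the average frequency of the ordered pair $[i_1,\dots,i_t][i_1]$ in the derived chain on $[\mathscr{S}]_{i_1}$ equals $w_c$. By ergodicity of the derived chain, this frequency is $\vPi^{i_1}([i_1,\dots,i_t])\cdot M_{i_ti_1}$. Substituting the formula from Theorem \ref{JQQ-Pi-theo} and using the factorization (\ref{eT-express}) gives
\begin{equation*}
  w_c \;=\; \frac{M_{i_1i_2}M_{i_2i_3}\cdots M_{i_{t-1}i_t}M_{i_ti_1}\,\cdot\,e\bigl(\mathcal{T}_{i_1,\dots,i_t}\bigr)}{\Sigma}.
\end{equation*}

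Second, I would identify the numerator with $Q(\alpha:\mathcal{A}(\alpha)=c)$ under the maximum entropy RDS. Any element of $\mathcal{T}_{i_1,\dots,i_t}$ is a directed rooted forest whose roots are exactly the vertices of $c$; grafting the cycle edges $i_1i_2,\dots,i_{t-1}i_t,i_ti_1$ onto such a forest produces a deterministic functional graph $G$ in which every non-cycle vertex drains into $c$ and every cycle vertex moves along $c$. This assignment is clearly a bijection onto $\{G:\mathcal{A}(G)=c\}$. Because the edge-weights multiply to $\prod_{j=1}^n M_{j\alpha_G(j)}$, which is precisely $Q(\alpha_G)$ under the maximum entropy RDS $Q(P_{i_1,\dots,i_n})=M_{1i_1}\cdots M_{ni_n}$ recalled in Sec. \ref{sec-4}, the numerator is
\begin{equation*}
  \sum_{G:\,\mathcal{A}(G)=c} e(G) \;=\; Q\bigl(\alpha:\mathcal{A}(\alpha)=c\bigr).
\end{equation*}

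Third, I would invoke Proposition \ref{sigma-rds} to replace $\Sigma$ by $\mathbb{E}^Q(\|\mathcal{A}(\alpha)\|)$, which yields (\ref{JQQ-wc}). The only nontrivial step is the bijective correspondence in the second paragraph, and even this is a straightforward unpacking of the definitions: a functional graph with single attractor $c$ must consist of the cycle $c$ together with a spanning forest of reverse-trees rooted on the vertices of $c$, which is exactly the data enumerated by $\mathcal{T}_{i_1,\dots,i_t}$. Once this structural decomposition is stated, the weight identification and the final substitution are immediate.
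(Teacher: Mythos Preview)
Your proposal is correct and follows essentially the same route as the paper: start from $w_c=\vPi^{i_1}([i_1,\dots,i_t])M_{i_ti_1}=e(\mathcal{T}_{[i_1,\dots,i_t]})M_{i_ti_1}/\Sigma$, identify the numerator with $Q(\alpha:\mathcal{A}(\alpha)=c)$, and invoke Proposition~\ref{sigma-rds} for the denominator. The paper merely asserts the numerator identification in one sentence, whereas you spell out the underlying bijection between $\mathcal{T}_{i_1,\dots,i_t}$ plus the cycle edges and the functional graphs with single attractor $c$; this added detail is helpful but not a different argument.
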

\begin{proof}
From the previous discussion, if $c=(i_1,\dots, i_t)$, $w_c=\frac{e(\mathcal{T}_{[i_1,\dots, i_t]})M_{i_ti_1}}{\Sigma}$. The numerator $e(\mathcal{T}_{[i_1,\dots, i_t]})M_{i_ti_1}$ is equal to the weight of the set of directed graph with the only one limit cycle $(i_1, \dots, i_t)$ and it can also be interpreted as the probability of the deterministic map with single attractor $c$, i.e, $Q(\alpha: \mathcal{A}(\alpha)=c)$. The denominator $\Sigma=\mathbb{E}^Q(\|\mathcal{A}(\alpha)\|)$ is proven in Proposition \ref{sigma-rds}.
\end{proof}

Instead of dividing the step size $t$ in finding the cycle weight $w_c$, divide the total number of cycles formed and we will have the following ergodic theorem. Though the proof is straightforward and elementary, the result in fact is very interesting. 
\begin{corollary} \label{cycle-ergodic}
The frequency of the cycle $c$ along the sample sequence almost surely converges to the probability of the deterministic map with the attractor $c$ given the attractor of the map is single, 
\begin{align}
\lim_{t\rightarrow +\infty} \frac{w_{c,t}(\omega)}{\sum_{c\in \mathcal{C}} w_{c,t}(\omega)}=\vp_c, \text{\ almost surely, }
\end{align}
where $\vp_c=Q(\alpha: \mathcal{A}(\alpha)=c\ |\ \mathcal{A}(\alpha) \text{\ is single})$.
\end{corollary}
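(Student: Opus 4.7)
The plan is to reduce the statement directly to the cycle-weight formula \eqref{JQQ-wc} and the ergodicity $\lim_{t\to\infty} w_{c,t}(\omega)/t = w_c$ almost surely established in \cite{jqq}. Concretely, I would rewrite the ratio as
\[
\frac{w_{c,t}(\omega)}{\sum_{c'\in\mathcal{C}} w_{c',t}(\omega)}
 \;=\; \frac{w_{c,t}(\omega)/t}{\sum_{c'\in\mathcal{C}} w_{c',t}(\omega)/t},
\]
and pass to the limit on the right.

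Since the state space $\mathscr{S}$ is finite, the set $\mathcal{C}$ of directed cycles is also finite, so the sum in the denominator is a finite sum and one can exchange limit and sum without any technicality. First I would invoke the pointwise ergodic theorem for cycle occurrences to conclude that, almost surely, the numerator tends to $w_c$ and the denominator tends to $\sum_{c'\in\mathcal{C}} w_{c'}$.

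Next I would plug in the explicit expression \eqref{JQQ-wc}. This yields
\[
\sum_{c'\in\mathcal{C}} w_{c'}
 \;=\; \frac{\sum_{c'\in\mathcal{C}} Q\bigl(\alpha:\mathcal{A}(\alpha)=c'\bigr)}{\mathbb{E}^{Q}\bigl(\|\mathcal{A}(\alpha)\|\bigr)}
 \;=\; \frac{Q\bigl(\alpha:\mathcal{A}(\alpha)\text{ is single}\bigr)}{\mathbb{E}^{Q}\bigl(\|\mathcal{A}(\alpha)\|\bigr)},
\]
because the events $\{\mathcal{A}(\alpha)=c'\}$ for $c'\in\mathcal{C}$ form a disjoint decomposition of the event $\{\mathcal{A}(\alpha)\text{ is single}\}$ (by the very definition of $\mathcal{C}$ as single attractors of the deterministic maps in the support of $Q$, via Section~\ref{sec-4.3}). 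Taking the ratio, the normalizer $\mathbb{E}^{Q}(\|\mathcal{A}(\alpha)\|)$ cancels and one obtains
\[
\lim_{t\to+\infty} \frac{w_{c,t}(\omega)}{\sum_{c'\in\mathcal{C}} w_{c',t}(\omega)}
 \;=\; \frac{Q(\alpha:\mathcal{A}(\alpha)=c)}{Q(\alpha:\mathcal{A}(\alpha)\text{ is single})}
 \;=\; Q\bigl(\alpha:\mathcal{A}(\alpha)=c \,\big|\, \mathcal{A}(\alpha)\text{ is single}\bigr) = \vp_c
\]
almost surely, which is the claim.

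The only point requiring a bit of care, and the mildest obstacle, is verifying that the denominator is almost surely positive for all large $t$ so that the ratio is well-defined: because the MC is irreducible and aperiodic, at least one nontrivial cycle is traversed with positive frequency, so $\sum_{c'} w_{c'}>0$ and hence $\sum_{c'} w_{c',t}(\omega)\to +\infty$ almost surely, keeping the ratio well-defined for all sufficiently large $t$.
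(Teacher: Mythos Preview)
Your proof is correct and follows essentially the same route as the paper: divide numerator and denominator by $t$, invoke the almost-sure convergence $w_{c,t}(\omega)/t\to w_c$, and then substitute \eqref{JQQ-wc} so that the normalizer $\mathbb{E}^Q(\|\mathcal{A}(\alpha)\|)$ cancels. Your added remarks on the finiteness of $\mathcal{C}$ and the eventual positivity of the denominator are minor clarifications the paper leaves implicit.
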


\begin{proof}
We know $\lim_{t\rightarrow +\infty}w_{c,t}(\omega)/t=w_c$ almost surely. After using Eq. (\ref{JQQ-wc}) we have 
\begin{align*}
\lim_{t\rightarrow +\infty}\frac{w_{c,t}(\omega)/t}{\sum_{c\in \mathcal{C}} w_{c,t}(\omega)/t}=\frac{w_c}{\sum_{c\in \mathcal{C}}w_c}=\frac{Q(\alpha: \mathcal{A}(\alpha)=c)}{\sum_{c\in \mathcal{C}}Q(\alpha: \mathcal{A}(\alpha)=c)}.
\end{align*}
\end{proof}

So $\sum_{c\in \mathcal{C}}w_c$ is the mean number of occurrences of any possible cycle per step. Then the reciprocal of this quantity, denoted $\lambda=\frac{1}{\sum_{c\in \mathcal{C}}w_c}$, will be the mean steps to generate a cycle and is the ``time unit'' of forming the cycle.

\begin{corollary}
 \begin{align}
 \lim_{t\rightarrow +\infty}\frac{\sum_{c\in \mathcal{C}}w_{c,t}(\omega)\|c\|}{\sum_{c\in \mathcal{C}}w_{c,t}(\omega)}=\lambda \ \text{almost surely.}
 \end{align} 
\end{corollary}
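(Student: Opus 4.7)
The plan is to observe that $\sum_{c\in\mathcal{C}}w_{c,t}(\omega)\|c\|$ counts precisely the number of transition steps of the original Markov chain that have been ``consumed'' by cycles extracted via the derived chain procedure, so it differs from $t$ only by the current length of the derived chain $\eta_t(\omega)$, which is uniformly bounded by $n$. Once this identity is in hand, the corollary follows by dividing numerator and denominator by $t$ and applying the ergodicity already recorded in the paper.

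First I would establish the bookkeeping identity
\begin{equation*}
\sum_{c\in\mathcal{C}}w_{c,t}(\omega)\,\|c\| \;=\; t \;-\; \bigl(|\eta_t(\omega)|-1\bigr),
\end{equation*}
by induction on $t$, using the definition of the derived chain: at each step either $\eta$ grows by one (no cycle formed, both sides increase by $0$ on the cycle-side and $1$ on the $t$-side, offset by a $+1$ to $|\eta_t|$) or $\eta$ shrinks by some $k-1\ge 0$ and a cycle of length $k\ge 1$ is formed (adding $k$ to the left side, while $t$ increases by $1$ and $|\eta_t|-1$ decreases by $k-1$). Since $|\eta_t(\omega)|\le n$ for all $t$, we obtain the two-sided bound
\begin{equation*}
1 - \frac{n-1}{t} \;\le\; \frac{1}{t}\sum_{c\in\mathcal{C}}w_{c,t}(\omega)\,\|c\| \;\le\; 1,
\end{equation*}
so the numerator divided by $t$ converges deterministically to $1$.

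Next I would handle the denominator. Since the state space $\mathscr{S}$ is finite, the set $\mathcal{C}$ of admissible cycles is finite (every cycle is a simple cycle in the directed graph of $M$), so the sum $\sum_{c\in\mathcal{C}}$ is a finite sum and we may interchange limit and summation freely. The ergodic statement $\lim_{t\to+\infty} w_{c,t}(\omega)/t = w_c$ almost surely for each $c\in\mathcal{C}$ (quoted from \cite{jqq}) then gives
\begin{equation*}
\lim_{t\to+\infty}\frac{1}{t}\sum_{c\in\mathcal{C}}w_{c,t}(\omega) \;=\; \sum_{c\in\mathcal{C}} w_c \quad\text{almost surely.}
\end{equation*}

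Finally, writing
\begin{equation*}
\frac{\sum_{c\in\mathcal{C}}w_{c,t}(\omega)\|c\|}{\sum_{c\in\mathcal{C}}w_{c,t}(\omega)} \;=\; \frac{\frac{1}{t}\sum_{c\in\mathcal{C}}w_{c,t}(\omega)\|c\|}{\frac{1}{t}\sum_{c\in\mathcal{C}}w_{c,t}(\omega)},
\end{equation*}
and combining the two steps above, the quotient converges almost surely to $1\big/\sum_{c\in\mathcal{C}} w_c = \lambda$. The only genuinely non-routine ingredient is the bookkeeping identity for the derived chain; everything else is just dividing through and invoking finiteness of $\mathcal{C}$ together with the already-cited ergodic theorem for $w_{c,t}(\omega)/t$. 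I do not anticipate a real obstacle, only the need to be careful with the inductive accounting at cycle-forming steps of varying length.
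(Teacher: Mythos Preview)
Your argument is correct and follows the same overall scheme as the paper: divide numerator and denominator by $t$ and pass to the limit. The paper's proof is shorter because it simply invokes the identity $\sum_{c\in\mathcal{C}} w_c\|c\|=1$ as known, applies the ergodic theorem $w_{c,t}/t\to w_c$ to both numerator and denominator, and reads off $\frac{\sum_c w_c\|c\|}{\sum_c w_c}=\frac{1}{\sum_c w_c}=\lambda$.

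Your treatment of the numerator is a genuine, if minor, variant: instead of appealing to $\sum_c w_c\|c\|=1$, you establish the bookkeeping identity $\sum_c w_{c,t}(\omega)\|c\|=t-(|\eta_t(\omega)|-1)$ directly by induction on the derived-chain mechanism and use the uniform bound $|\eta_t|\le n$ to get deterministic convergence of the numerator over $t$ to $1$. This is more self-contained (it does not rely on the identity $\sum_c w_c\|c\|=1$, which in the paper is only justified later via $\sum_c w_c J_c(i)=\vpi_i$), and as a by-product it actually \emph{proves} that identity. The cost is a short inductive check; the benefit is that the numerator limit holds for every $\omega$, not just almost surely.
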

\begin{proof}
We know $\sum_{c\in\mathcal{C}}w_c\|c\|=1$.
\begin{align*}
\lim_{t\rightarrow +\infty}\frac{\sum_{c\in \mathcal{C}}w_{c,t}(\omega)\|c\|/t}{\sum_{c\in \mathcal{C}}w_{c,t}(\omega)/t}=\frac{\sum_{c\in\mathcal{C}}w_c\|c\|}{\sum_{c\in \mathcal{C}}w_c}=\frac{1}{\sum_{c\in \mathcal{C}}w_c}.
\end{align*}
\end{proof}

For each sample sequence $X_t(\omega)$, it induces a sequence of cycles and $w_{c,t}(\omega)$ counts the number of cycles $c$ occurred up to $t$. Instead of studying $X_t(\omega)$, we try to study the dynamics of cycles. Unfortunately, the dynamics of cycle is not Markovian and seemingly complicated, but it is still ergodic with invariant distribution $\vp_c$ from Corollary \ref{cycle-ergodic}. 
The cycle weight $w_c$ can be express by $w_c=\vp_c/\lambda$.

With the expression of the cycle weight $w_c$, now we can connect the cycle coordinates with the edge coordinates $M_{ij}$ and invariant distribution $\vpi$.
 
\begin{corollary}
\begin{align} \label{circ-dist}
\sum_{c\in \mathcal{C}}w_cJ_c(i)=\vpi_i, \ \  \sum_{c\in \mathcal{C}}w_c J_c(i,j)=\vpi_i M_{ij},
\end{align}
where $J_c(i)=\begin{cases}1 & \text{if $i\in c$}\\ 0 & \text{Otherwise}\end{cases}$, $J_c(i,j)=\begin{cases}1 & \text{if $ij$ is an edge of $c$}\\ 0 & \text{Otherwise}\end{cases}$. 
\end{corollary}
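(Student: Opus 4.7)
The strategy is to read off both identities in (\ref{circ-dist}) as ergodic sample-path averages, by observing that along a realization of $X_t$ each visit to a state and each edge traversal is eventually absorbed into exactly one cycle formed by the derived-chain mechanism.

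I fix a sample path $X_0, X_1, \dots, X_T$ and its derived chain $\eta_t$. Whenever $\eta_{t-1}=[i_1,\dots,i_t]$ contracts to $\eta_t=[i_1,\dots,i_s]$ with $s\le t$, the cycle $c=(i_s, i_{s+1}, \dots, i_t)$ is formed; I declare $\|c\|=t-s+1$ visits (one to each state of $c$) together with the $\|c\|$ edges of $c$ to be \emph{released} into $c$. The degenerate case $s=t$ corresponds to a self-loop at $i_t$, which releases one visit to $i_t$ and the edge $i_t\to i_t$. Since every cycle in $\mathcal{C}$ is simple, each release contributes exactly $J_c(i)\in\{0,1\}$ visits to state $i$ and $J_c(i,j)\in\{0,1\}$ copies of the edge $i\to j$. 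The crucial finite-size observation is $\|\eta_T\|\le n$, so at any time at most $n$ visits and $n-1$ edges remain \emph{stored} in the derived chain and have not yet been released.

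By the ergodic theorem applied to the irreducible aperiodic chain $X_t$, almost surely,
\[
\lim_{T\to\infty}\frac{\#\{0\le t\le T: X_t=i\}}{T}=\vpi_i, \qquad \lim_{T\to\infty}\frac{\#\{0\le t<T: X_t=i,\ X_{t+1}=j\}}{T}=\vpi_i M_{ij}.
\]
Splitting each count as (released by time $T$) plus (still stored in $\eta_T$), the stored residue is bounded by $n$ and vanishes after dividing by $T$, so the limiting rate of released visits to $i$ (resp.\ released edges $i\to j$) equals $\vpi_i$ (resp.\ $\vpi_i M_{ij}$). On the other hand, cycle $c$ is formed $w_{c,T}(\omega)$ times up to time $T$, and each formation releases $J_c(i)$ visits to $i$ and $J_c(i,j)$ copies of edge $i\to j$; hence the total released counts equal $\sum_{c\in\mathcal{C}} J_c(i)\, w_{c,T}(\omega)$ and $\sum_{c\in\mathcal{C}} J_c(i,j)\, w_{c,T}(\omega)$, respectively. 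Since $\mathcal{C}$ is finite, I may interchange limit and sum; using $w_{c,T}(\omega)/T\to w_c$ almost surely, both identities in (\ref{circ-dist}) follow.

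The main obstacle I anticipate is establishing the bookkeeping invariant underlying the double counting: at every time $t$, the visits and edges \emph{stored} in $\eta_t$ are precisely those not yet released by any cycle formation up to time $t$, and each cycle formation releases exactly the items claimed. This is proved by induction on $t$, examining the three update rules for $\eta_t$ (extension by a new state, proper contraction producing a cycle of length $\ge 2$, or self-loop). Once this invariant is in hand, the two conservation identities ``traversal count = released count + stored count'' for visits to $i$ and for edges $i\to j$, combined with the ergodic limit above, yield (\ref{circ-dist}).
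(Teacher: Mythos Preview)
Your proof is correct. For the first identity it coincides with the paper's argument: both compute $\sum_{c}w_c J_c(i)$ as the limiting sample-path frequency of visits to $i$ that have been ``released'' into cycles, bound the unreleased residue stored in the derived chain by $n$, and invoke the ergodic theorem for $X_t$.

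For the second identity you and the paper diverge. You repeat the same ergodic bookkeeping, now for edge traversals: each transition $i\to j$ is eventually released into exactly one cycle, at most $n-1$ edges remain stored in $\eta_T$, and the ergodic theorem for pair frequencies gives $\vpi_i M_{ij}$. The paper instead switches to a purely combinatorial computation: it fixes the edge $ij$, enumerates all cycles through it as $(j,i_1,\dots,i_t,i)$, and uses the explicit tree formula $w_c=e(\mathcal{T}_{[j,i_1,\dots,i_t,i]})M_{ij}/\Sigma$ from Theorem~\ref{JQQ-Pi-theo} together with the identity $e\big(\cup_{i_1,\dots,i_t}\mathcal{T}_{[j,i_1,\dots,i_t,i]}\big)=e(\mathcal{T}_i)$ from Eq.~(\ref{eq-set}) to collapse the sum to $\vpi_i M_{ij}$. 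Your route is more uniform and more elementary, since it needs only the almost-sure convergence $w_{c,T}/T\to w_c$ and nothing about the tree expressions for $w_c$ or $\vpi$; the paper's route, on the other hand, exhibits (\ref{circ-dist}) as a direct algebraic consequence of the graph-theoretic machinery developed in Theorem~\ref{JQQ-Pi-theo}, which is the structural point that section is trying to showcase.
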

\begin{proof}
The RHS of the first equality is 
$$\sum_{c\in \mathcal{C}}w_cJ_c(i)=\lim_{t\rightarrow+\infty}\frac{\sum_{c\in \mathcal{C}}w_{c,t}J_c(i)}{t}.$$
The numerator is the number of occurrences of the state $i$ up to $t$ without counting the derived chain. But the maximum length of the derived chain is $n$. 
$$\frac{\# \text{state\ } i-n}{t} \le\frac{\sum_{c\in \mathcal{C}}w_{c,t}J_c(i)}{t}\le \frac{\# \text{state\ } i}{t}.$$ Taking the limit $t\rightarrow +\infty$, it gives $\sum_{c\in \mathcal{C}}w_cJ_c(i)=\vpi_i$.

Since $ij$ is an edge of $c$, we can write it out RHS of the second equality explicitly, 
\begin{align*}
\sum_{c\in \mathcal{C}}w_c J_c(i,j)=\sum_{i_1, \dots, i_t}\frac{e(\mathcal{T}_{[j, i_1, \dots, i_t, i]})}{\Sigma}M_{ij}=\vpi_i M_{ij}.
\end{align*}
From Eq. (\ref{eq-set}), $\sum_{i_1, \dots, i_t}\frac{e(\mathcal{T}_{[j, i_1, \dots, i_t, i]})}{\Sigma}=\frac{e(\mathcal{T}_i)}{\Sigma}=\vpi_i$.
\end{proof}

\begin{theorem}
The entropy production rate $e_p$ is represented by the cycle coordinates $(\mathcal{C}, w_c)$ and furthermore, by the invariant distribution of cycle dynamics $\vp_c$ and $\lambda$ 
\begin{align}
&e_p=\sum_{c\in\mathcal{C}}w_c\log\frac{w_c}{w_{c_-}},\ \ \ e_p=\frac{H(\vp_c, \vp_{c_-})}{\lambda}.
\end{align}
\end{theorem}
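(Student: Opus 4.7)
The plan is to first derive the ratio $w_c/w_{c_-}$ in closed form as a product of transition-matrix entries along the cycle, then swap the order of summation using the cycle-distribution identity (\ref{circ-dist}), and finally plug the result into the explicit formula (\ref{epr}) for $e_p$.

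For the first step, fix a cycle $c = (i_1, i_2, \ldots, i_k) \in \mathcal{C}$. By Corollary (\ref{JQQ-wc}) combined with Theorem \ref{JQQ-Pi-theo} we have $w_c = e(\mathcal{T}_{[i_1, \ldots, i_k]})\, M_{i_k i_1}/\Sigma$, and using the factorization (\ref{eT-express}) this becomes
\begin{equation*}
w_c = \frac{M_{i_1 i_2}\, M_{i_2 i_3}\, \cdots\, M_{i_{k-1} i_k}\, M_{i_k i_1}}{\Sigma}\, e(\mathcal{T}_{i_1, \ldots, i_k}),
\end{equation*}
where $\mathcal{T}_{i_1, \ldots, i_k}$ is the set of directed forests whose roots are the \emph{set} $\{i_1, \ldots, i_k\}$. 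The key observation — and the only non-routine point in the proof — is that this forest set depends on the roots only as an unordered set, so $e(\mathcal{T}_{i_1, \ldots, i_k}) = e(\mathcal{T}_{i_k, \ldots, i_1})$. Applying the same formula to $c_- = (i_k, \ldots, i_1)$ and taking the ratio, the forest factor cancels and we obtain
\begin{equation*}
\frac{w_c}{w_{c_-}} = \prod_{(i,j)\,\text{edge of }c} \frac{M_{ij}}{M_{ji}}, \qquad \log\frac{w_c}{w_{c_-}} = \sum_{i,j \in \mathscr{S}} J_c(i,j)\, \log\frac{M_{ij}}{M_{ji}}.
\end{equation*}

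For the second step, I substitute this into $\sum_c w_c \log(w_c/w_{c_-})$, interchange the sums over $c$ and over $(i,j)$, and invoke the cycle-distribution identity $\sum_{c \in \mathcal{C}} w_c\, J_c(i,j) = \vpi_i M_{ij}$ from (\ref{circ-dist}):
\begin{equation*}
\sum_{c\in\mathcal{C}} w_c \log\frac{w_c}{w_{c_-}} = \sum_{i,j} \log\frac{M_{ij}}{M_{ji}} \sum_{c\in\mathcal{C}} w_c J_c(i,j) = \sum_{i,j} \vpi_i M_{ij} \log\frac{M_{ij}}{M_{ji}},
\end{equation*}
which is exactly $e_p$ by Theorem in (\ref{epr}). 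The second equality then follows by inserting $w_c = \vp_c/\lambda$ and $w_{c_-} = \vp_{c_-}/\lambda$: the $\lambda$ inside the logarithm cancels, and the overall prefactor $1/\lambda$ combines the remaining sum into $H(\vp_c, \vp_{c_-})/\lambda$.

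The main obstacle, as indicated, is the cancellation of the forest factor in the ratio $w_c/w_{c_-}$; once the symmetry $e(\mathcal{T}_{i_1, \ldots, i_k}) = e(\mathcal{T}_{i_k, \ldots, i_1})$ is recognized everything else is a direct substitution using identities already established in Section \ref{sec-4.3}. No new estimate or limit argument is required.
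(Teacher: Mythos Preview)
Your proof is correct and follows essentially the same approach as the paper: both use the factorization (\ref{eT-express}) to cancel the symmetric forest factor $e(\mathcal{T}_{i_1,\ldots,i_k})$ in the ratio $w_c/w_{c_-}$, and both invoke the cycle-distribution identity (\ref{circ-dist}) to pass between the edge and cycle representations of $e_p$. The only cosmetic difference is that the paper runs the interchange of sums starting from the edge formula (\ref{epr}) toward the cycle sum, whereas you start from the cycle sum and land on (\ref{epr}); for the second equality your direct substitution $w_c=\vp_c/\lambda$ is a slight streamlining of the paper's longer computation via $Q(\alpha:\mathcal{A}(\alpha)=c)$.
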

\begin{proof}
Use the second equality of Eq. (\ref{circ-dist}) in the expression of entropy production rate in (\ref{epr}), assume the cycle $c=(i_1, \dots, i_t)$. Due to the Eq. (\ref{eT-express}), we have $w_c=M_{i_1i_2}\dots M_{i_ti_1}\frac{e(\mathcal{T}_{i_1,\dots, i_t})}{\Sigma}$ and $w_{c_-}=M_{i_1i_t}\dots M_{i_2i_1}\frac{e(\mathcal{T}_{i_1,\dots, i_t})}{\Sigma}$. 

\begin{align*} 
e_p=\sum_{i,j} \Big(\sum_{c\in \mathcal{C}}w_c J_c(i,j)\Big)\log \frac{M_{ij}}{ M_{ji}}= \sum_{c\in \mathcal{C}}w_c \log\frac{M_{i_1i_2}\dots M_{i_ti_1}}{M_{i_1i_t}\dots M_{i_2i_1}}=\sum_{c\in \mathcal{C}}w_c \log\frac{w_c}{w_{c_-}}.
\end{align*}

Use the expression we get in Eq. (\ref{JQQ-wc})
\begin{align*}
e_p&=\sum_{c\in \mathcal{C}} w_c\log \frac{w_c}{w_{c_-}}=\sum_{c\in \mathcal{C}}\frac{Q(\alpha: \mathcal{A}(\alpha)=c)}{\mathbb{E}^Q(\|\mathcal{A}(\alpha)\|)}
\log\frac{Q(\alpha: \mathcal{A}(\alpha)=c)}{Q(\alpha: \mathcal{A}(\alpha)=c_-)} \\
&=\frac{\sum_{c'\in\mathcal{C}}Q(\alpha: \mathcal{A}(\alpha)=c')}{\mathbb{E}^Q(\|\mathcal{A}(\alpha)\|)} \sum_{c\in\mathcal{C}} \frac{Q(\alpha: \mathcal{A}(\alpha)=c)}{\sum_{c'\in\mathcal{C}}Q(\alpha: \mathcal{A}(\alpha)=c')}\log\frac{Q(\alpha: \mathcal{A}(\alpha)=c)}{Q(\alpha: \mathcal{A}(\alpha)=c_-)}\\
&= \Big(\sum_{c'\in\mathcal{C}} \frac{Q(\alpha: \mathcal{A}(\alpha)=c')}{\mathbb{E}^Q(\|\mathcal{A}(\alpha)\|)}\Big)\sum_{c\in\mathcal{C}}\vp_c\log\frac{\vp_c}{\vp_{c_-}}=\Big(\sum_{c'\in \mathcal{C}}w_{c'}\Big) H(\vp_c, \vp_{c_-})=\frac{H(\vp_c, \vp_{c_-})}{\lambda}.
\end{align*}
\end{proof}
From the theorem, entropy production rate $e_p$ is proportional to the relative entropy of the invariant distribution of cycle dynamics with respect to its reverse cycle. The extra constant term $1/\lambda$ is the average number of cycle occurrences per step which bridges from the cycle dynamics back to the original MC. 

\subsection{Entropy Production of Doubly Stochastic MC
and its invertible RDS}  Entropy production characterizes
dynamic randomness, which can be divided conceptually
as ``uncertainties in the past''  and ``uncertainties in the
future''.  The former is represented by non-invertible,
``many-to-one'' maps while the latter is best represented 
by stochastic, ``one-to-many'' dynamics.  In connection
to the entropy production in non-invertible dynamics, 
Ruelle has introduced the notion of {\em folding entropy}
\cite{ruelle-jsp}.

	In Sec. \ref{sec-4.3}, the entropy production is discussed in terms of maximum entropy representation of the MC. Here we establish
a relationship between the discrete state, finite RDS with only
invertible transformations, {\em invertible RDS}, and the
entropy production rate of its corresponding MC, which is 
always doubly stochastic:  Both the rows and columns 
of the MC transition matrix sum to 1. Thus the invariant distribution is uniform, i.e, $\vpi_i=1/n$. From the Birkhoff-Von Neumann theorem, the set of doubly stochastic matrices is the convex hull of the set of $n\times n$ permutation matrices, and the vertices are precisely permutation matrices  \cite{ywq}. In other words, for every
doubly stochastic MC, one can assign the probability measure $Q(\alpha)$ on the the set of invertible maps $\Delta$, 
such that $\sum_{\alpha\in \Delta}Q(\alpha)=1$ and $Q(\alpha: \alpha(i)=j)=M_{ij}$.  It provides an  invertible RDS representation for a doubly stochastic MC through an i.i.d. process.   Such representation is different from the maximum entropy representation, and it may not be unique.  For the 
invertible RDS, each invertible map $\alpha$ has a well-defined 
inverse map $\alpha^{-1}$;
its matrix representation is the inverse of its permutation matrix,
$P_{\alpha^{-1}}=P_{\alpha}^{-1}$. So the cycle of the inverse map is reversed compared with the original map. 

	With this set up, one can introduce a {\em time-reversal dual} probability measure on the set of permutation matrices $\Delta$, $Q^-(\alpha)=Q(\alpha^{-1})$. The probabilities of the invertible map and its inverse are flipped. $Q^{-}$ define a dual RDS through an i.i.d. process, and its corresponding MC is exactly the time-reversed process with transition matrix $M^-_{ij}=M_{ji}$.

\begin{theorem}
\label{th-4.6}
The entropy production rate of a doubly stochastic MC has an upper bound in terms of the relative entropy of measure $Q$ with respect to $Q^-$. 
\begin{align}
e_p\le H(Q, Q^-).
\end{align}
The equality holds if and only if $Q=Q^-$.
\end{theorem}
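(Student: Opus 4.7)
The idea is to exploit the doubly stochastic structure to express $e_p$ as an average of row-wise relative entropies, and then bound each of those by $H(Q,Q^-)$ via the log-sum inequality (equivalently, monotonicity of relative entropy under a pushforward). Since $M$ is doubly stochastic, the stationary measure is uniform, $\vpi_i=1/n$, and formula (\ref{epr}) becomes
\begin{equation*}
  e_p=\frac{1}{n}\sum_{i,j}M_{ij}\log\frac{M_{ij}}{M_{ji}}.
\end{equation*}
Using the invertible RDS representation, write $M_{ij}=\sum_{\alpha:\alpha(i)=j}Q(\alpha)$. Because $\alpha$ is invertible, $\alpha(j)=i\Leftrightarrow\alpha^{-1}(i)=j$, and reindexing by $\beta=\alpha^{-1}$ gives
\begin{equation*}
  M_{ji}=\sum_{\alpha:\alpha(j)=i}Q(\alpha)=\sum_{\beta:\beta(i)=j}Q(\beta^{-1})=\sum_{\beta:\beta(i)=j}Q^-(\beta).
\end{equation*}
Thus the $i$-th row of $M$ (resp.\ $M^T$) is the pushforward of $Q$ (resp.\ $Q^-$) under the evaluation map $e_i:\alpha\mapsto\alpha(i)$.

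The key step is then a term-by-term application of the log-sum inequality: for each fixed $i$,
\begin{equation*}
   \sum_{j}M_{ij}\log\frac{M_{ij}}{M_{ji}}
   =\sum_{j}\Bigl(\sum_{\alpha:\alpha(i)=j}Q(\alpha)\Bigr)\log\frac{\sum_{\alpha:\alpha(i)=j}Q(\alpha)}{\sum_{\alpha:\alpha(i)=j}Q^-(\alpha)}
  \le \sum_{\alpha}Q(\alpha)\log\frac{Q(\alpha)}{Q^-(\alpha)}=H(Q,Q^-),
\end{equation*}
which is exactly the statement that relative entropy is non-increasing under the pushforward by $e_i$. Averaging over $i$ yields $e_p\le H(Q,Q^-)$.

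For the equality clause, equality in the log-sum inequality for row $i$ forces $Q(\alpha)/Q^-(\alpha)$ to be constant on every fiber $\{\alpha:\alpha(i)=j\}$. If this holds for every $i$, then $r(\alpha):=Q(\alpha)/Q^-(\alpha)$ depends only on $\alpha(i)$ for each $i$ separately; an elementary combinatorial argument then shows $r$ is globally constant on $\Delta$. Concretely, given permutations $\alpha,\alpha'$ with $n\ge 3$ one can always find a $\gamma\in\Delta$ sharing one coordinate with each, giving $r(\alpha)=r(\gamma)=r(\alpha')$; the case $n=2$ is trivial since $\Delta=\{\mathrm{id},\mathrm{swap}\}$, both involutions, whence $Q^-\equiv Q$ and both sides vanish. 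Since $Q$ and $Q^-$ are probability measures, a constant ratio forces $Q=Q^-$. The converse is immediate: if $Q=Q^-$ then $M=M^T$ and every $\log$ term vanishes, so $e_p=0=H(Q,Q^-)$.

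The main technical point is really not in the inequality itself, which is a direct application of a standard information-theoretic bound, but in the equality case, where one must rule out pathological coincidences in which the fiber-wise constancy fails to propagate to global constancy; as explained above, the symmetry of the permutation group handles this, so there is no serious obstacle.
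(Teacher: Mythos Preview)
Your proof of the inequality is correct and is essentially the paper's argument: rewrite $M_{ij}$ and $M_{ji}$ as sums of $Q$ and $Q^-$ over the fiber $\{\alpha:\alpha(i)=j\}$, apply the log-sum inequality, and use that each $\alpha$ contributes once for every $i$. Your pushforward/data-processing phrasing is a clean way to package the same computation.

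The equality clause, however, has a genuine gap --- and in fact the ``only if'' direction is false as stated. Your bridge argument tacitly assumes the intermediate permutation $\gamma$ lies in the support of $Q$ (or $Q^-$); otherwise $r(\gamma)$ is undefined and the chain $r(\alpha)=r(\gamma)=r(\alpha')$ breaks. Nothing guarantees this. Concretely, take $n=3$ and $Q$ supported on $\{\mathrm{id},(123),(132)\}$ with $Q(\mathrm{id})=1-p-q$, $Q((123))=p$, $Q((132))=q$, $0<p,q$, $p\ne q$. Every fiber $\{\alpha:\alpha(i)=j\}$ intersects the support in a single point, so the log-sum inequality is an equality term by term, and one computes $e_p=(p-q)\log(p/q)=H(Q,Q^-)$ while $Q\ne Q^-$. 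The paper's own proof glosses over the same point: it states the log-sum equality condition as ``$a_k=b_k$'', whereas the correct condition is that $a_k/b_k$ be constant on the index set, and that weaker condition does not force $Q=Q^-$. What the argument actually yields is that equality holds iff $Q(\alpha)/Q^-(\alpha)$ is constant on each fiber $\{\alpha:\alpha(i)=j\}$ for every $i$; this is strictly weaker than $Q=Q^-$ unless one adds a support hypothesis (e.g.\ $Q(\alpha)>0$ for all $\alpha\in\Delta$), under which your connectivity argument does go through.
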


\begin{proof}
The entropy production rate $e_p=\frac{1}{n}\sum_{ij} M_{ij}\log\frac{M_{ij}}{M_{ji}}$. Based on the log-sum inequality, which states that for two sets of non-negative numbers $(a_1, \dots, a_n)$ and $(b_1, \dots, b_n)$,  
$$ \sum_{i=1}^n a_i\log\frac{a_i}{b_i}\ge \Big(\sum_{i=1}^n a_i\Big)\log\frac{(\sum_{i=1}^n a_i)}{
(\sum_{i=1}^n b_i)}.$$
The equality holds if and only if $a_i=b_i$. 
\begin{align}\nonumber
e_p&=\frac{1}{n}\sum_{ij}  \Big(\sum_{\alpha: \alpha(i)=j}Q(\alpha)\Big)\log\frac{  \sum_{\alpha: \alpha(i)=j}Q(\alpha)}{  \sum_{\alpha: \alpha(j)=i}Q(\alpha)} \\ \nonumber
&=\frac{1}{n}\sum_{ij}  \Big(\sum_{\alpha: \alpha(i)=j}Q(\alpha)\Big)\log\frac{  \sum_{\alpha: \alpha(i)=j}Q(\alpha)}{  \sum_{\alpha: \alpha(i)=j}Q^-(\alpha)} \\ \nonumber
&\le \frac{1}{n}\sum_{ij} \sum_{\alpha: \alpha(i)=j}\Big(Q(\alpha)\log\frac{ Q(\alpha)}{  Q^-(\alpha)}\Big)\\  \label{last-eq}
&=\sum_{\alpha}Q(\alpha)\log\frac{ Q(\alpha)}{  Q^-(\alpha)}.
\end{align}
The last equality (\ref{last-eq}) is because the term $Q(\alpha)\log\frac{ Q(\alpha)}{  Q^-(\alpha)}$ sums up exactly $n$ times for each $\alpha$ which is the number of edges. The equality holds if and only if $Q(\alpha)=Q^-(\alpha)$. 
\end{proof}
The condition that the equality holds is the sufficient condition of the doubly stochastic MC being detailed balance, i.e, $M^-=M$.  

Theorem \ref{th-4.6} should be compared and contrasted
with an earlier result from Kifer and Ye {\em et al.} \cite{kifer2012random,ywq}: $h_{\text{RDS}}\ge h_{\text{MC}}$, i.e, the metric entropy of MC in (\ref{mc-m-entropy}) is upper bounded by the metric entropy of RDS, $h_{\text{RDS}}=-\sum_{\alpha}Q(\alpha)\ln Q(\alpha)$. 
$h_{\text{RDS}}$ exists unique finite upper bound which is maximum entropy RDS, but $H(Q,Q^-)$ here could be $+\infty$.

\bibliographystyle{plain}

\bibliography{RDS}

\appendix

\section{Proof of Theorem \ref{Hill-theorem}} \label{appendix-A}
\begin{proof}
Consider the equation to solve $\sum_{k}\vpi_k M_{kj}=\vpi_j$. It is equivalent with solving the following equation
\begin{align}
\sum_{\substack{k=1\\ k\ne j}}^n \vpi_k M_{kj}=\vpi_j-\vpi_j M_{jj}=\sum_{\substack{i=1\\ i\ne j}}^n M_{ji} \vpi_j
\end{align} 
Let $\mathcal{G}_j$ be the set of directed graphs that have exactly one limit cycle and $j$ is contained in that cycle. If a directed rooted tree $T\in \mathcal{T}_j$, adding the edge $j \rightarrow i$ will create an element $G\in \mathcal{G}_j$ and $e(G)=M_{ji}e(T)$. If a directed graph $G\in \mathcal{G}_j$, deleting the edge $j\rightarrow i$ will create an element $T\in \mathcal{T}_j$ and $M_{ji}e(T)=e(G)$. So we have $\sum_{\substack{i=1\\ i\ne j}}^n M_{ji}e(\mathcal{T}_j)=e(\mathcal{G}_j)$. Here we are considering the outgoing edge from $j$ and we can consider the incoming edge $k\rightarrow j$ as well. Similarly, $\sum_{\substack{k=1\\ k\ne j}}^n M_{kj}e(\mathcal{T}_k)=e(\mathcal{G}_k)$. So $\vpi_j \propto e(\mathcal{T}_j)$. After renormalization, the solution (\ref{Hill-pi}) is the invariant distribution. 
\end{proof}
We will give one example for the theorem. 

{\bf Example:} If the MC has the transition matrix 
\[
    M=\begin{bmatrix} M_{11} & M_{12} & M_{13} \\ M_{21} & M_{22} & 0\\ M_{31} &0 &M_{33} \end{bmatrix},
\]
each set $\mathcal{T}_i$ has only one element. The directed rooted trees and their weights are shown in Fig. \ref{Ex-pi}. So the invariant distribution is 
\begin{align}
\vpi=\left(\frac{M_{21}M_{31}}{\Sigma}, \frac{M_{12}M_{31}}{\Sigma}, \frac{M_{21}M_{13}}{\Sigma}\right)
\end{align}
where $\Sigma=M_{21}M_{31}+M_{12}M_{31}+M_{21}M_{13}$.

\begin{figure}
\includegraphics[scale=0.55]{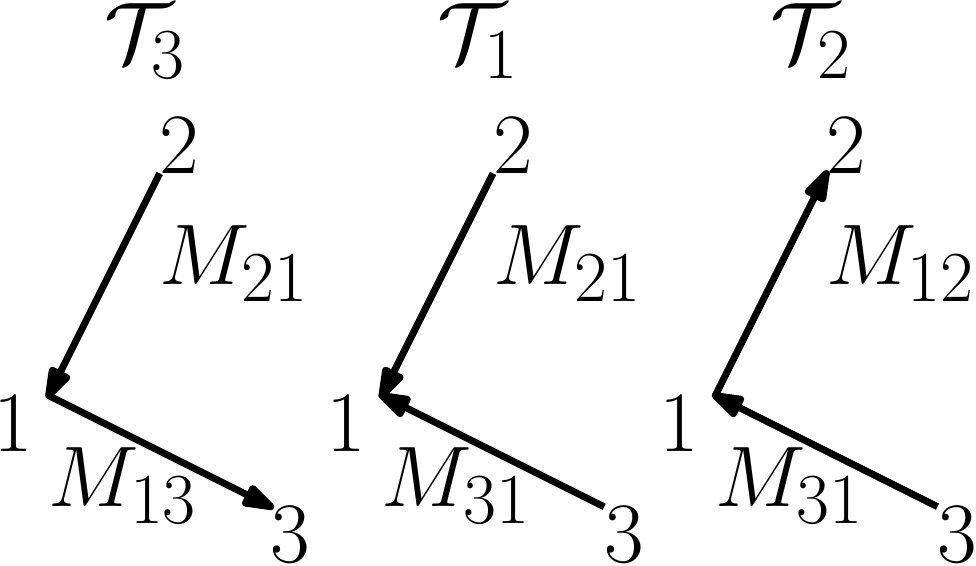}
\caption{$e(\mathcal{T}_1)=M_{21}M_{31}, e(\mathcal{T}_2)=M_{12}M_{31}, e(\mathcal{T}_3)=M_{21}M_{13}$}
\label{Ex-pi}
\end{figure}

\section{Matrix-Tree Theorem} \label{appendix-B}

The matrix-tree theorem is a refined formula that gives the complete symbolic series for directed rooted trees with specified roots and more generally for forests with specified roots \cite{Bollobas}. We introduce variable $M_{ij}$ for all $i,j\in\mathscr{S}$ and define the monomial $x_T$ for the directed rooted trees $T$ to be the product of the variables $M_{ij}$ for all directed edges $i\rightarrow j$ in $T$. For the example above in Fig. \ref{Ex-pi}, they are 
\begin{align}
x_{T_1}=M_{21}M_{31}, x_{T_2}=M_{12}M_{31}, x_{T_3}=M_{21}M_{13}
\end{align}
Note the weight of the trees we defined before is exactly evaluated in the monomial for given $M_{ij}$. Moreover, the directed rooted tree are determined by its monomial $x_T$. Similarly, it can be extended to rooted forests. 

Given a subset $I\subset\mathscr{S}$, we define $F_{n, I}$ to be the sum of the monomials for all forests $G$ whose set of roots is  $I$, which is called the generating function for $G$. 
\begin{align}
F_{n, I}=\sum_{G: \text{roots}(G)=I}x_G. 
\end{align}

The matrix-tree theorem is stated as follows, 
\begin{theorem}
In MC, the generating function $F_{n, I}(M)$ for all forests rooted at $I$, with edges directed towards the roots, is given by the determinant 
\begin{align}
F_{n, I}(M)=\det D(\{I\}^c)
\end{align}
where $D=I-M$ and $D(\{I\}^c)$ is the submatrix of the matrix $D$ by deleting the rows and columns with indices $i\in I$.
\end{theorem}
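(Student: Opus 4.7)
My plan is to prove the matrix--tree identity $F_{n,I}(M)=\det D(\{I\}^c)$ by direct combinatorial expansion of the determinant together with a sign-reversing involution on the non-forest terms. Let me write $L = D(\{I\}^c)$, so $L_{ij}=-M_{ij}$ for $i\neq j$ (with $i,j\in\{I\}^c$), while $L_{ii}=1-M_{ii}$. The crucial first step is to exploit row-stochasticity $\sum_j M_{ij}=1$ of the transition matrix to rewrite the diagonal as $L_{ii}=\sum_{j\ne i}M_{ij}$, where the sum ranges over \emph{all} $j\in\mathscr{S}\setminus\{i\}$ (including $j\in I$). This is what allows edges to "leave" $\{I\}^c$ and point into the root set $I$.

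Next, I would expand $\det L$ by the Leibniz formula as $\sum_{\sigma}\mathrm{sgn}(\sigma)\prod_{i\in\{I\}^c}L_{i,\sigma(i)}$, and then apply multi-linearity to each diagonal factor via the expansion of $L_{ii}$ above. Each resulting term is indexed by a pair $(\sigma,k)$, where $\sigma$ is a permutation of $\{I\}^c$ and $k$ assigns to every fixed point $i$ of $\sigma$ a choice $k_i\neq i$ in $\mathscr{S}$. Such a pair determines a function $\phi:\{I\}^c\to\mathscr{S}$ with $\phi(i)\neq i$ (set $\phi(i)=\sigma(i)$ if $\sigma(i)\ne i$, else $\phi(i)=k_i$), i.e., a functional digraph on $\{I\}^c$ whose edges are allowed to terminate in $I$. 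The signed weight of the term is $\mathrm{sgn}(\sigma)(-1)^{\#\{i:\sigma(i)\ne i\}}\prod_i M_{i,\phi(i)}$.

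I would then identify the forest and non-forest contributions. If $\phi$ has no directed cycle in $\{I\}^c$ (equivalently, every path from $\{I\}^c$ eventually enters $I$), then $\sigma$ must be the identity and $k$ is forced to $\phi$; the contribution is exactly $+\prod_i M_{i,\phi(i)}=x_T$, where $T$ is the forest rooted at $I$ with edge set $\{i\to\phi(i)\}$. Summing over such $\phi$ yields precisely $F_{n,I}(M)$. For the remaining terms, containing at least one directed cycle inside $\{I\}^c$, I would build a sign-reversing involution: fix a total order on the vertices, select the cycle $C=(v_1\to\cdots\to v_\ell\to v_1)$ of $\phi$ containing the smallest-labelled cycle vertex, and toggle between the two ways this cycle can arise from $(\sigma,k)$ --- either as a genuine $\ell$-cycle of $\sigma$ or as a set of fixed points of $\sigma$ with $k_{v_i}=v_{i+1}$. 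A quick sign tally shows the first option contributes $(-1)^{\ell-1}\cdot(-1)^{\ell}\prod M=-\prod M$ (sign of the cyclic permutation times one minus per off-diagonal edge), while the second contributes $+\prod M$ (all diagonal choices). These cancel in pairs, eliminating every non-forest term.

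The step I expect to need the most care is verifying that the involution is well-defined and genuinely pairs terms with the same $\phi$ but opposite signs: one must check that toggling the distinguished cycle does not affect the rest of the functional digraph and does not change which cycle is "smallest," so that the involution squares to the identity. Once that bookkeeping is in place, the theorem follows immediately. As a sanity check, specializing to $|I|=1$ with $I=\{j\}$ recovers Theorem~\ref{Hill-theorem} (the $j$-th diagonal cofactor of $D$ equals $e(\mathcal{T}_j)$), consistent with Appendix~\ref{appendix-A}.
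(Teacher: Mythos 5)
Your proof is correct and essentially complete. Note first that the paper itself does not prove this theorem: it is quoted from the literature (Bollob\'as) and only illustrated on a $3\times 3$ example, so there is no in-paper argument to match; what you supply is the standard determinant-expansion proof, and the steps that matter are all present. In particular you correctly isolate the one place where the hypothesis ``MC'' is genuinely used: $D=I-M$ has diagonal entries $1-M_{ii}$, and only row-stochasticity lets you rewrite these as $\sum_{j\ne i}M_{ij}$ with $j$ ranging over all of $\mathscr{S}$, which is what allows edges out of $\{I\}^c$ into the root set $I$; without that the identity would fail as stated. The sign bookkeeping also checks out: a term of the expanded Leibniz sum with underlying functional digraph $\phi$ in which $\sigma$ consists of $r$ cycles of $\phi$ of lengths $\ell_1,\dots,\ell_r$ carries sign $\prod_k(-1)^{\ell_k-1}(-1)^{\ell_k}=(-1)^r$, so the contributions of any fixed non-forest $\phi$ sum to $\sum_S(-1)^{|S|}=0$ over subsets $S$ of its cycle set; your involution (toggling the cycle through the least-labelled cyclic vertex) is exactly the paired form of this cancellation, and it is well defined because the toggle changes $(\sigma,k)$ but not $\phi$, hence not the distinguished cycle. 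One point worth stating explicitly in a final write-up: when $\phi$ has no directed cycle inside $\{I\}^c$, finiteness forces every forward orbit to terminate in $I$, so the surviving digraphs are precisely the spanning forests rooted at $I$ and the correspondence with the monomials $x_G$ defining $F_{n,I}$ is a bijection. Your closing sanity check is also apt: the case $|I|=1$ recovers Theorem \ref{Hill-theorem}, which Appendix \ref{appendix-A} establishes by a different device (an edge add/delete bijection verifying the stationarity equations rather than a determinant expansion), so your argument supplies the determinantal half of the ``connection between the two proofs'' that the paper alludes to in Appendix \ref{appendix-B}.
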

In the previous example, the matrix $D$ is 
\[
   \begin{bmatrix} M_{12}+M_{13} & -M_{12} & -M_{13} \\ -M_{21} & M_{21} & 0\\ -M_{31} &0 &M_{31} 
\end{bmatrix}.
\]
If $I=\{2\}$, then $\det D(\{2\})$ is $(M_{12}+M_{13})M_{31}-M_{31}M_{13}=M_{12}M_{31}$. It agrees with the expression for $x_{T_2}$. 

The generating function $F_{n,\{i\}}(M)$ evaluated at given transition matrix $M$ is equal to $e(\mathcal{T}_i)$ the weight of the set of directed rooted trees whose root is state $i$. Then the Eq.
(\ref{Hill-pi}) can be rewritten as 
\begin{align}
\vpi_i=\frac{D(\{i\}^c)}{\sum_i D(\{i\}^c)}.
\end{align}
\end{document}